\newcommand{\reacy}{$\rea$-sa-constructibility}
\newcommand{\reac}{$\rea$-sa-constructible}
\newcommand{\mc}[1]{\mathcal{#1}}
\newcommand{\mbb}[1]{\mathbb{#1}}
\newcommand{\mrm}[1]{\mathrm{#1}}
\newcommand{\mfr}[1]{\mathfrak{#1}}
\renewcommand{\phi}{\varphi}
\renewcommand{\theta}{\vartheta}
\renewcommand{\rho}{\varrho}
\newcommand{\ep}{\epsilon}
\newcommand{\lra}{\longrightarrow}
\renewcommand{\bigr}[1]{{\big(#1\big)}}
\renewcommand{\Bigr}[1]{{\Big(#1\Big)}}
\newcommand{\ou}[3][]{\overset{{#1}}{\underset{{#2}}{{#3}}}}
\newcommand{\com}{\mathbb{C}}
\newcommand{\rea}{\mathbb{R}}
\newcommand{\integer}{\mathbb{Z}}
\newcommand{\integergz}{\mathbb{Z}_{>0}}
\newcommand{\nat}{\mathbb{N}}
\newcommand{\reagz}{\mathbb{R}_{>0}}
\newcommand{\Mod}{\mathrm{Mod}}
\newcommand{\Op}{\mathrm{Op}}
\newcommand{\opxsac}{\mathrm{Op}^c(\xsa)}
\newcommand{\xsa}{{X_{sa}}}
\newcommand{\ysa}{{Y_{sa}}}
\newcommand{\Ho}[3][]{\mathcal{H}\mathrm{om}_{#1}(#2,#3)}
\newcommand{\mcHom}[3][]{\mathcal{H}\mathrm{om}_{#1}(#2,#3)}
\newcommand{\ho}[3][]{\mathrm{Hom}_{#1}(#2,#3)}
\newcommand{\RH}[3][]{\mathit{R}\mathcal{H}\mathit{om}_{#1}(#2,#3)}
\newcommand{\M}{\mathcal{M}}
\newcommand{\ot}{\mathcal{O}^t}
\newcommand{\otxsa}{\mathcal{O}^t_\xsa}
\newcommand{\dbt}{\mathcal{D}b^t}
\newcommand{\Db}{\mathcal{D}b}
\newcommand{\dbxr}{\mathcal{D}b_{X_\rea}}
\newcommand{\dbtxsa}{\mathcal{D}b^t_{\xsa}}
\newcommand{\D}{\mathcal{D}}
\newcommand{\DX}{{\D_X}}
\newcommand{\dbdx}[1][]{D^b_{#1}(\DX)}
\newcommand{\bdc}[2][]{D^b_{#1}(#2)}
\renewcommand{\mod}{\mathrm{Mod}}
\renewcommand{\M}{\mathcal{M}}
\newcommand{\N}{\mathcal{N}}
\newcommand{\Dfinv}[1]{\mathbb D{#1}^{*}}
\newcommand{\dfinv}[1]{\mathrm D{#1}^{*}}
\newcommand{\Dotimes}{\overset D{\otimes}}
\newcommand{\DDotimes}{\overset {\mbb D}{\otimes}}
\newcommand{\dbd}[2][]{D^b_{#1}(\D_{#2})}
\newcommand{\omt}[1]{\Omega^t_{#1}}
\newcommand{\drt}[2][]{\mrm{D\!R}^t_{{{#1}}}{#2}}
\newcommand{\RGa}[2]{R\Gamma_{[#1]}{#2}}
\newcommand{\SM}{S(\mc M)}
\newcommand{\ch}{\mathrm{char}}
\newcommand{\dmod}[2][]{\mathrm{Mod}_{#1}(\mathcal D_{#2})}
\newcommand{\solt}{\mathscr{S}ol^t}
\newcommand{\THom}{\mc {TH}om}
\newtheorem{thm}{Theorem}[subsection]
\newtheorem{df}[thm]{Def\mbox{}inition}
\newtheorem{prop}[thm]{Proposition}
\newtheorem{lem}[thm]{Lemma}
\newtheorem{rem}[thm]{Remark}
\newtheorem{conj}[thm]{Conjecture}
\numberwithin{equation}{section}
\newcommand{\ben}{\begin{enumerate}}
\newcommand{\een}{\end{enumerate}}
\author{Giovanni Morando}
\title{\textbf{Constructibility of tempered solutions of holonomic $\D$-modules}}
\date{}
\long\def\symbolfootnote[#1]#2{\begingroup%
\def\thefootnote{\fnsymbol{footnote}}\footnote[#1]{#2}\endgroup}
\begin{document}

\maketitle

\thispagestyle{empty}

\begin{abstract}
  In this paper we prove the constructibility on the subanalytic sites
  of the sheaves of tempered holomorphic solutions of holonomic
  $\D$-modules on complex analytic manifolds. Such a result solves a
  conjecture of M. Kashiwara and P. Schapira
  (\cite{ks_microlocal_indsheaves}).

\end{abstract}

\symbolfootnote[0]{\phantom{a}\hspace{-7mm}\textit{2010 MSC.} Primary
  32C38; Secondary 32B20 32S40 14Fxx.}

\symbolfootnote[0]{\phantom{a}\hspace{-7mm}\textit{Keywords and
    phrases:} $\D$-modules, irregular singularities, tempered
  holomorphic functions, subanalytic.}

\vspace{-13mm}

\tableofcontents

\phantom{a}

\phantom{a}

\section*{Introduction}\markboth{Introduction}{Introduction}
\addcontentsline{toc}{section}{\textbf{Introduction}}

The constructibility of the complexes of holomorphic solutions of
holonomic $\D$-modules on complex manifolds, proved by Masaki
Kashiwara in \cite{kashiwara_regular1}, is a fundamental result in the
algebraic study of systems of partial differential equations. In its
general form, it states that if $\mc M,\mc N$ are holonomic
$\D$-modules on a complex manifold $X$, the cohomology groups of the
complex $\RH[\D_X]{\mc M}{\mc N}$ are locally constant sheaves with
finite dimensional stalks when restricted to the strata of some
complex analytic stratification. In particular, this implies (for $\mc
N\simeq\mc{O}_X$) that the complex of holomorphic solutions of a holonomic
$\D$-module on a complex manifold is constructible. A part from its
intrinsic importance, this results is at the base of the statement of
the Riemann--Hilbert correspondence, one of the most important
achievements in $\D$-module theory. Such a correspondence states that
the derived functor of holomorphic solutions establishes an
equivalence between the bounded derived categories of complexes of
$\D$-modules with regular holonomic cohomology and the bounded derived
category of complexes of sheaves with constructible cohomology. In his
proof (\cite{kashiwara_79,kashiwara_riemann-hilbert}), M. Kashiwara
gave an explicit inverse to the functor of holomorphic solutions, the
functor $\mc {TH}om$. Kashiwara defined it on the category of
$\rea$-constructible sheaves. The objects of such a category satisfy
the above conditions of locally constancy and finiteness on some
subanalytic stratification.
For various reasons, the category of $\rea$-constructible sheaves is
more handy than the category of $\com$-constructible sheaves. On the
other hand the $\D$-modules obtained by applying the functor $\mc
{TH}om$ to $\rea$-constructible sheaves which are not
$\com$-constructible are still not well understood.

Later (\cite{ks_moderate_formal_cohomology,ks_indsheaves}),
M. Kashiwara and P. Schapira deepened the study of the functor $\THom$
(and its dual, the Whitney tensor) realizing it as the complex of
sheaves on the subanalytic site $\xsa$ relative to the analytic
manifold $X$ of tempered holomorphic functions, denoted
$\ot_\xsa$. Furthermore, in \cite{ks_microlocal_indsheaves}, they
suggested the use of tempered holomorphic solutions of holonomic
$\D$-modules for approaching the general case of the Riemann--Hilbert
correspondence. Since then, this latter idea produced significant
contributions to the study of irregular $\D$-modules. Let us briefly
recall the classical results in this subject.

The local, 1-dimensional version of the irregular Riemann--Hilbert
correspondence has been solved through the work of many mathematicians
as P.  Deligne, M. Hu\-ku\-ha\-ra, A. Levelt, B. Malgrange,
J.-P. Ramis, Y.  Sibuya, H. Turrittin (see \cite{dmr},
\cite{malgrange_birkhauser} and \cite{sabbah_lnm}). Such a classical
result consists in two steps: the analysis of the formal
decompositions of meromorphic connections and their asymptotic
lifts. The higher dimensional case is more complicated. The formal
decomposition was conjectured by C. Sabbah in \cite{sabbah_ast} and
later proved by T. Mochizuki (\cite{mochizuki1,mochizuki2}) in the
algebraic case and by K. Kedlaya (\cite{kedlaya1}, \cite{kedlaya2}) in
the analytic one. Recently
(\cite{dagnolo_kashiwara_dim1,dagnolo_kashiwara_rh}), A. D'Agnolo and
M. Kashiwara defined the functor of enhanced tempered solutions using
tempered holomorphic functions. They proved that it is fully faithful
on the category of holonomic $\D_X$-modules and they proved a
reconstruction theorem for such category. Such a result is a
generalization of the classical ones as it is of global nature.
Anyway, for the moment being, the statement of a Riemann--Hilbert
correspondence for holonomic $\D$-modules is not clear. Indeed,
despite of the important results characterizing the enhanced tempered
solutions of holonomic $\D$-modules given in
\cite{dagnolo_kashiwara_rh}, a topological description (as for
perverse sheaves or $\com$-constructible sheaves for the regular case)
of the image category of the enhanced tempered solution functor has
not been achieved.

In \cite{ks_microlocal_indsheaves}, the authors give a definition of
$\rea$-constructibility for complexes of sheaves on the subanalytic
sites and they conjectured the $\rea$-constructibility for tempered
solutions of holonomic $\D$-modules. This should be a first attempt to
describe the image category of a functor of tempered solutions. The
conjecture was proved for the case of complex curves
(\cite{morando_existence_theorem}) and in a weaker version for complex
manifold (\cite{morando_preconstructibility}). In the present article
we prove the conjecture in its full generality.

Let us conclude by recalling that Kashiwara--Schapira's conjecture is
strictly related to Kashiwara's functor $\THom$ and the $\D$-modules
obtained by applying it to $\rea$-constructible sheaves. The
conjecture states that the the complex of solutions of these latter
modules (which are not even coherent) with values in any holonomic
$\D$-module is a complex of sheaves on $X$ with $\rea$-constructible
cohomology. 

The present article is organized as follows. In the first section we
review classical results on the subanalytic geometry, the sheaves on
subanalytic sites, the $\D$-modules, the tempered De Rham complexes
and the elementary asymptotic decompositions of flat meromorphic
connections.  In the second section we prove our main result in two
steps, first we prove Kashiwara--Schapira's conjecture for meromorphic
connections, then for $\D$-modules supported on their singular locus.

{\em Acknowledgments}: I wish to express my gratitude to M. Kashiwara,
T. Mochizuki, C. Sabbah and P. Schapira for many essential
discussions. Most of the results exposed in this paper were obtained
during my stay at RIMS, Kyoto University, I acknowledge the kind
hospitality I found there.

\emph{Funding}: the research leading to these results has received
funding from the [European Union] Seventh Framework Programme
[FP7/2007-2013] under grant agreement n. [PIOF-GA-2010-273992].

\section{Notations and review}  \label{recall}
\subsection{Subanalytic sites}

For the theory of subanalytic sets we refer to
\cite{bm_ihes,hironaka_subanalytic_sets,hironaka_pisa}. For the
convenience of the reader we recall here some definitions and the
Rectilinearization Theorem in a case which we will need later.

Let $M$ be a real analytic manifold.

\begin{df}
  \begin{enumerate}
  \item A subset $A$ of $M$ is said \emph{subanalytic at } $p\in M$ if
    there exist a neighborhood $W$ of $p$, a finite set $J$, real
    analytic manifolds $N_{r,j}$ and proper real analytic maps
    $f_{r,j}:N_{r,j}\to W$ ($r=1,2$, $j\in J$) such that
$$ A\cap W= \bigcup_{j\in J}(f_{1,j}(N_{1,j})\setminus f_{2,j}(N_{2,j})) \ . $$
A subset is called \emph{subanalytic} if it is subanalytic at any
point of $M$.

\item A subset $B$ of $\rea^n$ is called a \emph{quadrant} if there
  exists a disjoint partition $\{1,\ldots,n\}=J_0\sqcup J_+\sqcup J_-$
  such that $$ B=\bigg\{(x_1,\ldots,x_n)\in\rea^n;
\scriptsize{  \begin{array}{cc}
x_j=0 & j\in J_0 \\ 
x_k>0 & k\in J_+ \\
x_l<0 & l\in J_-    
  \end{array}}
\bigg\} \ . $$
\end{enumerate}
\end{df}

\begin{thm}[\cite{hironaka_pisa}, Chapters 4 and 7]\label{thm:hironaka_rectilinearization}
  Let $U$ be an open subanalytic subset of a real analytic manifold
  $M$ of dimension $n$ and let $\phi:M\to\rea$ be an analytic map
  which does not vanish on $U$. For any $x_0\in M$ there exist a real analytic map
  $\pi:\rea^n\to X$ and a compact set $K\subset\rea^n$ such that
  \begin{enumerate}
  \item $\pi(K)$ is a neighborhood of $x_0\in M$,
  \item $\pi^{-1}(U)$ is a union of quadrants in $\rea^n$,
  \item $\pi$ induces an open embedding of $\pi^{-1}(U)$ into $X$,
  \item $\phi\circ\pi(y_1,\ldots,y_n)=a\cdot y_1^{r_1}\cdot\ldots\cdot
    y_n^{r_n}$, the $r_j$'s being non-negative integers and
    $a\in\rea\setminus\{0\}$.
  \end{enumerate}
\end{thm}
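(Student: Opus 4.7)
The proof rests on two foundational results of Hironaka: embedded resolution of singularities applied to the analytic function $\phi$, and the uniformization (rectilinearization) theorem for subanalytic sets applied to $U$. The plan is to realize both desingularizations by a single proper real analytic modification $\sigma:\tilde M\to M$, and then to read off $\pi$ from a coordinate chart of $\tilde M$ near a point of $\sigma^{-1}(x_0)$.

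First I would form the closed subanalytic set $\{\phi=0\}\cup\partial U$ and invoke Hironaka's desingularization theorem applied to it. The outcome is a proper real analytic map $\sigma:\tilde M\to M$, obtained as a finite composition of blow-ups along smooth analytic centres, such that in local coordinates on $\tilde M$ two conditions hold simultaneously: first, $\phi\circ\sigma$ takes the monomial-times-unit form $u(y)\,y_1^{r_1}\cdots y_n^{r_n}$ with $u$ a non-vanishing analytic function; second, $\sigma^{-1}(U)$ is a finite union of open coordinate quadrants. These express respectively the normal crossing form for $\phi\circ\sigma$ and the rectilinearization of the open subanalytic set $U$; performing both simultaneously is the reason for desingularizing the union of their singular loci.

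Next, since $\sigma$ is proper, the fibre $\sigma^{-1}(x_0)$ is compact. After possibly refining the modification, I would pick a coordinate chart $\psi:\rea^n\to\tilde M$ near a point of $\sigma^{-1}(x_0)$ whose image under $\sigma$ covers a neighbourhood of $x_0$, set $\pi:=\sigma\circ\psi$, and take $K\subset\rea^n$ to be a suitable compact coordinate region. Condition (i) is then automatic by the choice of $K$; (ii) follows from the rectilinearity of $\sigma^{-1}(U)$; and (iii) holds because $\sigma$ is an isomorphism off its exceptional locus, which by the hypothesis on $\phi$ is disjoint from $\pi^{-1}(U)$.

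Finally, to reach the clean monomial form (iv), I would absorb the analytic unit $u$ into the coordinates. When some $r_j>0$, $u$ has constant sign on the chart (being connected and non-vanishing), so $|u(y)|^{1/r_j}$ is a positive real analytic function; replacing $y_j$ by $\tilde y_j:=|u(y)|^{1/r_j}y_j$ is then a real analytic change of coordinates (non-vanishing Jacobian at $y_j=0$) preserving the quadrant decomposition, and it converts $u(y)\,y_1^{r_1}\cdots y_n^{r_n}$ into $a\,y_1^{r_1}\cdots y_n^{r_n}$ with $a=\pm 1$. When all $r_j=0$, the function $\phi\circ\sigma$ is already an analytic unit, which a further coordinate change reduces to the constant $a:=\phi(x_0)$. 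The main obstacle in this strategy is performing the desingularization of $\phi$ and the rectilinearization of $U$ through \emph{the same} modification $\sigma$ so that both normal-form conditions hold in the same chart; this is precisely why one applies Hironaka's theorem to the single closed subanalytic set $\{\phi=0\}\cup\partial U$, and it is this joint rectilinearization that constitutes the deep content of the statement.
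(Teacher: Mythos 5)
The paper does not prove this statement: it is cited verbatim from Hironaka's Pisa lecture notes (Chapters 4 and 7), so there is no in-paper argument to compare against. Taking your sketch on its own terms, the overall strategy (resolve, then read off local coordinates) is the right intuition, but two of the steps would not survive scrutiny.

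First, for $U$ merely subanalytic the frontier $\partial U$ need not be contained in any proper analytic subset of $M$, so one cannot literally apply resolution of singularities, which is a theorem about coherent ideals or analytic spaces, to $\{\phi=0\}\cup\partial U$. Rectilinearization of subanalytic sets is a genuinely deeper statement: it proceeds by representing $U$ as a proper image of a semianalytic set in a higher-dimensional manifold, resolving there, and pushing forward, which is precisely the content of Hironaka's uniformization and of Bierstone--Milman's local resolution of singularities. Your sketch collapses this into a one-shot blow-up over $M$, which could at best work in the semianalytic case, and even there one resolves the ideal generated by the local defining functions rather than ``the set.'' Second, and more decisively, the assertion that one can ``pick a coordinate chart $\psi:\rea^n\to\tilde M$ near a point of $\sigma^{-1}(x_0)$ whose image under $\sigma$ covers a neighbourhood of $x_0$'' is false whenever $\sigma$ involves a nontrivial blow-up over $x_0$: the fibre $\sigma^{-1}(x_0)$ is then positive-dimensional, and the $\sigma$-image of any compact set contained in a single chart of $\tilde M$ is pinched at $x_0$ rather than a full neighbourhood. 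Already for the point blow-up of $\rea^2$, the standard chart has $\sigma\circ\psi(u,v)=(u,uv)$, and the image of any compact $\{|u|\le\epsilon,\,|v|\le N\}$ is $\{|x|\le\epsilon,\,|y|\le N|x|\}$, which contains no neighbourhood of the origin. ``Possibly refining the modification'' only enlarges the exceptional fibre and makes this worse. The map $\pi$ that Hironaka actually produces is a single real analytic map $\rea^n\to M$ which is not of the form $\sigma\circ\psi$ for any single chart $\psi$ of the resolved space; constructing such a $\pi$ together with a compact $K$ so that (i)--(iv) hold simultaneously is the real content of the theorem, and it is exactly the step your sketch elides.
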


For the theory of sheaves on topological spaces and for the results on
derived categories we will use, we refer to \cite{ks_som}. For the
theory of sheaves on the subanalytic site that we are going to recall
now, we refer to \cite{ks_indsheaves} and
\cite{prelli_subanalytic_sheaves}.

Let $X$ be a real analytic manifold countable at inf\mbox{}inity. The
subanalytic site $\xsa$ associated to $X$ is defined as follows. An
open subset $U$ of $X$ is an open set for $\xsa$ if it is relatively
compact and subanalytic. The family of open sets of $\xsa$ is denoted
$\opxsac$. For $U\in\opxsac$, a subset $S$ of the family of open
subsets of $U$ is said an open covering of $U$ in $\xsa$ if
$S\subset\opxsac$ and, for any compact $K$ of $X$, there exists a
f\mbox{}inite subset $S_0\subset S$ such that $K\cap(\cup_{V\in
  S_0}V)=K\cap U$.

For $Y=X$ or $\xsa$, $k$ a commutative ring, one denotes by $k_Y$ the
constant sheaf. For a sheaf of rings $\mc R_Y$, one denotes by
$\Mod(\mc R_Y)$ the category of sheaves of $\mc R_Y$-modules on $Y$
and by $\bdc{\mc R_Y}$ the bounded derived category of $\Mod(\mc
R_Y)$.

With the aim of def\mbox{}ining the category $\Mod(k_\xsa)$, the
adjective ``relatively compact'' can be omitted in the
def\mbox{}inition of $\xsa$. Indeed, in \cite[Remark
6.3.6]{ks_indsheaves}, it is proved that $\Mod(k_\xsa)$ is equivalent
to the category of sheaves on the site whose open sets are the open
subanalytic subsets of $X$ and whose coverings are the same as $\xsa$.

One denotes by
$$ \rho:X\lra\xsa \ ,$$ 
the natural morphism of sites given by the inclusion of $\opxsac$ into
the category of open subsets of $X$. We refer to \cite{ks_indsheaves}
for the def\mbox{}initions of the functors
$\rho_*:\Mod(k_X)\lra\Mod(k_\xsa)$ and
$\rho^{-1}:\Mod(k_\xsa)\lra\Mod(k_X)$ and for Proposition
\ref{prop_functors} below.
\begin{prop}\label{prop_functors}
\begin{enumerate}
\item The functor $\rho^{-1}$ is left adjoint to $\rho_*$.
\item The functor $\rho^{-1}$ has a left adjoint denoted by
  $\rho_!:\Mod(k_X)\to\Mod(k_\xsa)$.
\item The functors $\rho^{-1}$ and $\rho_!$ are exact, $\rho_*$ is
  exact on $\rea$-constructible sheaves.
\item The functors $\rho_*$ and $\rho_!$ are fully faithful.
\end{enumerate}
\end{prop}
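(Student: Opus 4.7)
The statement is a compendium of standard facts about the morphism of sites $\rho:X\to\xsa$ (all drawn from \cite{ks_indsheaves}), so the plan is to treat the four items in order, relying on formal properties of morphisms of sites for (i), (ii), (iv), and on a specifically subanalytic ingredient for the non-formal part of (iii).

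For (i), the adjunction $\rho^{-1}\dashv\rho_*$ is the universal property attached to any morphism of sites: $\rho^{-1}G$ is defined as the sheafification of the presheaf $U\mapsto \varinjlim_{V\in\opxsac,\,V\supset U} G(V)$ for $U$ open in $X$, and the usual unit/counit verification goes through. For (ii), I would construct $\rho_!$ explicitly as the sheafification on $\xsa$ of the presheaf
\[
U\longmapsto \varinjlim_{V\in\opxsac,\,\overline V\subset U} F(V),
\]
and check the adjunction $\mathrm{Hom}_{\xsa}(\rho_!F,G)\simeq\mathrm{Hom}_X(F,\rho^{-1}G)$ directly from the formulas, using that every $U\in\opxsac$ is the filtered union of its relatively compact subanalytic open subsets $V$ with $\overline V\subset U$.

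For (iii), the exactness of $\rho^{-1}$ is formal, since one can compute its stalks as $(\rho^{-1}G)_x=\varinjlim_{U\ni x,\,U\in\opxsac}G(U)$ and filtered colimits are exact. The exactness of $\rho_!$ is then obtained from its explicit description, combined with the exactness of sheafification; as a left adjoint it is already right exact, and left exactness is inherited from the filtered colimit in the presheaf formula. The delicate part is the exactness of $\rho_*$ on $\rea$-constructible sheaves: the key input is the defining property of coverings on $\xsa$, namely that any open covering of $U\in\opxsac$ admits a finite subcovering of every compact subset of $U$. For an $\rea$-constructible sheaf $F$, sections over $U\in\opxsac$ can be controlled by sections over a finite subanalytic cover (by the local finiteness of subanalytic stratifications), so surjectivity passes from $X$ to $\xsa$ and the associated Čech sequence is exact. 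I expect this to be the main technical obstacle: it is where the subanalytic hypothesis and the constraint on $F$ are genuinely used, rather than formal site-theoretic manipulations.

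Finally, for (iv), full faithfulness of $\rho_*$ is equivalent to $\rho^{-1}\rho_*\simeq\mathrm{id}$, which one checks on stalks: $(\rho^{-1}\rho_*F)_x=\varinjlim_{U\ni x,\,U\in\opxsac}F(U)=F_x$, by cofinality of subanalytic relatively compact neighbourhoods of $x$ in the filter of all open neighbourhoods. For $\rho_!$, the analogous identity $\rho^{-1}\rho_!F\simeq F$ is extracted from the explicit presheaf formula at stalks, again using that every neighbourhood of $x$ contains some $V\in\opxsac$ with $x\in V$ and $\overline V$ still a neighbourhood of $x$. With these identities the standard adjunction argument yields the full faithfulness.
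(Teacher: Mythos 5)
The paper itself does not prove Proposition \ref{prop_functors}: it explicitly defers to \cite{ks_indsheaves} for both the definitions of $\rho_*$, $\rho^{-1}$, $\rho_!$ and the proposition, and then records (immediately after) the standard description of $\rho_!$, namely that $\rho_!(F)$ is the sheaf on $\xsa$ associated to the presheaf $U\mapsto F(\overline U)=\Gamma(\overline U;F)$. Your attempt therefore goes beyond the paper, so I will assess it on its own terms.

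The concrete problem is your formula for $\rho_!$. You propose to sheafify
$U\mapsto \varinjlim_{V\in\opxsac,\ \overline V\subset U} F(V)$,
an inner approximation of $U$, whereas the correct construction (quoted by the paper from \cite{ks_indsheaves}) sheafifies $U\mapsto F(\overline U)=\varinjlim_{W\supset\overline U}F(W)$, an outer approximation. These presheaves are genuinely different: if $F$ is a skyscraper at a boundary point $p\in\partial U$, then $F(\overline U)$ contains the nonzero stalk $F_p$, while your colimit vanishes because $p\notin V$ for any $V$ with $\overline V\subset U$. This is not an innocent discrepancy that disappears after sheafification, because the Grothendieck topology on $\xsa$ only allows coverings that admit finite subcoverings of compacts, and every finite covering of $U$ must have some member whose closure still contains $p$; it is precisely this boundary behaviour that the outer formula retains and the inner one discards. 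Consequently your verification of the adjunction $\mathrm{Hom}_{\xsa}(\rho_!F,G)\simeq\mathrm{Hom}_X(F,\rho^{-1}G)$ ``directly from the formulas'' cannot succeed as written, and the exactness of $\rho_!$ and the isomorphism $\rho^{-1}\rho_!F\simeq F$ in item (iv), which you derive from the same incorrect formula, inherit the gap. You should replace your presheaf by $U\mapsto F(\overline U)$, after which the stalk computation $(\rho^{-1}\rho_!F)_x=\varinjlim_{U\ni x}F(\overline U)=F_x$ goes through cleanly by cofinality of $\{\overline U: x\in U\in\opxsac\}$ among compact neighbourhoods of $x$.

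Two further remarks. First, for the exactness of $\rho_!$, the argument ``a left adjoint is right exact, and left exactness is inherited from the filtered colimit'' is too quick: $\Gamma(W;\cdot)$ is only left exact, so the presheaf $U\mapsto F(\overline U)$ does not obviously preserve epimorphisms and one has to argue at the level of the subanalytic topology that the sheafified sequence is exact; this is genuinely the content of \cite{ks_indsheaves} and deserves to be stated rather than absorbed into a formal remark. Second, your treatment of the exactness of $\rho_*$ on $\rea$-constructible sheaves correctly identifies this as the only non-formal point, but the phrase about sections being ``controlled by a finite subanalytic cover'' does not yet amount to a proof; what is actually needed is that for an epimorphism $F\to F''$ of $\rea$-constructible sheaves and $U\in\opxsac$, the map $\Gamma(U;F)\to\Gamma(U;F'')$ is surjective, which in \cite{ks_indsheaves} is obtained from the finiteness of $\rea$-constructible cohomology on relatively compact subanalytic opens (equivalently, via subanalytic triangulation). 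Since the paper simply cites these facts, the honest version of your proof should either cite them too or carry out that finiteness argument explicitly.
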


The functor $\rho_!$ is described as follows. If
$U\in\opxsac$ and $F\in\Mod(k_X)$, then $\rho_!(F)$ is the sheaf on $\xsa$
associated to the presheaf $U\mapsto F\bigr{\overline U}$. 



Let us conclude this subsection by recalling the definition of
$\rea$-sa-constructibility for sheaves on $\xsa$. It is due to
M. Kashiwara and P. Schapira (\cite{ks_microlocal_indsheaves}).

Denote by $\bdc[\rea-c]{\com_X}$ the full triangulated subcategory of
$\bdc{\com_X}$ consisting of complexes whose cohomology modules are
$\rea$-constructible sheaves. In what follows, for
$F\in\bdc{\com_\xsa}$ and $G\in\bdc[\rea-c]{\com_X}$, we set for short
$$ \RH[\com_X]GF:=\rho^{-1}\RH[\com_\xsa]{R\rho_*\,G}F\in\bdc{\com_X} \ .
 $$

\begin{df}\label{df_constructible} 
  Let $F\in\bdc{\com_\xsa}$.  We say that $F$ is \emph{$\rea$
    subanalytic constructible} (\emph{$\rea$-sa-constructible} for
  short) if for any $G\in\bdc[\rea-c]{\com_X}$,
   \begin{equation}
     \label{eq:condition_constructibility}
  \RH[\com_X]GF\in\bdc[\rea-c]{\com_X}\ . \end{equation}
\end{df}

Let us remark that the condition
\eqref{eq:condition_constructibility}, defining the property of
\reacy, implies that such property is local with respect to the
topology of $\xsa$ and that it can be tested locally on $X$ (i.e. with
respect to $G$).

\begin{rem}
  The definition of $\rea$-sa-constructibility on $\xsa$ given above is
  quite abstract. A more geometrical description of such property,
  recalling the classical one for sheaves on analytic manifolds, would
  be of great interest.

  With this aim in mind, it is worth recalling that, by means of
  classical topos theory, one can prove the existence of a topological
  space $\widetilde {X_{sa}}$ such that $\Mod(\com_{\widetilde
    {X_{sa}}})$ is equivalent to $\Mod(\com_{X_{sa}})$. The
  semi-algebraic case is deeply studied in \cite{bcr}. Anyway, the
  different descriptions of $\widetilde {X_{sa}}$ do not allow a
  straightforward generalization of the classical constructibility
  property.
\end{rem}

\subsection{$\D$-modules}

The results on $\D$-modules we are going to use in this paper are well
exposed in the literature, see for example \cite{kashiwara_dmod} and
\cite{bjork}. Nonetheless, for the convenience of the reader, we
prefer to recall them here.

Let $X$ be a complex analytic manifold. One denotes by $\mathcal{O}_X$
the sheaf of rings of holomorphic functions on $X$ and by $\D_X$ the
sheaf of rings of linear partial differential operators with
coefficients in $\mathcal{O}_X$.

Given two left $\D_X$-modules $\mc M_1,\mc M_2$, one denotes by $\mc
M_1\Dotimes\mc M_2$ the internal tensor product and by
$\cdot\overset{\mbb D}\otimes\cdot$ its extension to the derived
category of $\D_X$-modules. Let us start by recalling the following

\begin{prop}
\label{prop:tensor_commutation}
Let $\mc N\in\bdc{\D_X^{op}}$, $\mc M_1, \mc M_2\in\bdc{\D_X}$. 
 Then
$$  \mc N\ou[L]{\D_X}\otimes(\mc M_1\overset{\mbb D}\otimes\mc M_2)\simeq (\mc N\ou[L]{\mathcal{O}_X}\otimes \mc M_1)\ou[L]{\D_X}\otimes \mc M_2  \ .$$  
\end{prop}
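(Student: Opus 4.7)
\emph{Strategy and underived identity.} The isomorphism is a derived associativity / projection formula. My first step would be to verify the underived analogue: for a right $\D_X$-module $\mc N$ and left $\D_X$-modules $\mc A,\mc B$, both $\mc N\otimes_{\D_X}(\mc A\otimes_{\O_X}\mc B)$ and $(\mc N\otimes_{\O_X}\mc A)\otimes_{\D_X}\mc B$ coincide canonically with the quotient of $\mc N\otimes_{\O_X}\mc A\otimes_{\O_X}\mc B$ by the Leibniz-type relations
$$n\xi\otimes a\otimes b \;=\; n\otimes\xi a\otimes b + n\otimes a\otimes\xi b,\qquad \xi\in\Theta_X.$$
This follows directly from the definition of the diagonal $\D_X$-action on $\mc A\otimes_{\O_X}\mc B$ together with the induced right $\D_X$-action $(n\otimes a)\xi=n\xi\otimes a - n\otimes\xi a$ on $\mc N\otimes_{\O_X}\mc A$.

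\emph{Derivation via flat resolutions.} To lift the identification to the derived category, I would choose a resolution $\mc F^\bullet\to\mc N$ of $\mc N$ by right $\D_X$-modules flat both over $\O_X$ and over $\D_X$ — for instance, of induced form $\mc F^i=\mc N^i\otimes_{\O_X}\D_X$ with $\mc N^i$ locally $\O_X$-free — and similarly a resolution $\mc G^\bullet\to\mc M_1$ by left $\D_X$-modules $\mc G^j=\D_X\otimes_{\O_X}\mc M_1^j$, flat over both $\O_X$ and $\D_X$. The key technical observation is that each $\mc F^i\otimes_{\O_X}\mc G^j$ is flat as a right $\D_X$-module under the diagonal action: by the underived identity just proved, the functor $(\mc F^i\otimes_{\O_X}\mc G^j)\otimes_{\D_X}(-)$ is canonically isomorphic to $\mc F^i\otimes_{\D_X}(\mc G^j\otimes_{\O_X}(-))$, and the latter is exact since $\mc G^j$ is $\O_X$-flat and $\mc F^i$ is $\D_X$-flat.

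\emph{Comparison of the two sides.} Using these resolutions, the left-hand side is represented by the total complex of $\mc F^\bullet\otimes_{\D_X}(\mc G^\bullet\otimes_{\O_X}\mc M_2)$: inside, $\mc M_1\DDotimes\mc M_2\simeq\mc G^\bullet\otimes_{\O_X}\mc M_2$ by $\O_X$-flatness of $\mc G^\bullet$, and outside, the derived $\D_X$-tensor with $\mc N$ is computed against the $\D_X$-flat $\mc F^\bullet$. The right-hand side is represented by the total complex of $(\mc F^\bullet\otimes_{\O_X}\mc G^\bullet)\otimes_{\D_X}\mc M_2$: here $\mc N\ou[L]{\O_X}\otimes\mc M_1\simeq\mc F^\bullet\otimes_{\O_X}\mc G^\bullet$ by $\O_X$-flatness, and the outer $\D_X$-tensor is already derived thanks to the diagonal flatness just noted. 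Applying the underived identity in each bidegree $(i,j)$ identifies these two double complexes termwise, so their totalizations represent the same object of $\bdc{\com}$. The only real obstacle is the flatness check for the diagonal right $\D_X$-structure on $\mc F^i\otimes_{\O_X}\mc G^j$; everything else is formal bookkeeping of module structures.
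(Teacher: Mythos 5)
The paper states this proposition without proof — it is recalled as a standard fact of $\D$-module theory, so there is no argument in the paper against which to compare yours. Your proof is correct and is essentially the standard one (it is the argument behind, e.g., Proposition 1.5.18 in \cite{kashiwara_dmod}). The underived identity holds: both sides are the quotient of $\mc N\otimes_{\O_X}\mc A\otimes_{\O_X}\mc B$ by the Leibniz relation, and the $\O_X$-relations on the two sides match because for $f\in\O_X$ the right action on $\mc N\otimes_{\O_X}\mc A$ is $(n\otimes a)f=nf\otimes a=n\otimes fa$, with no correction term. The crux of the derived lift is, as you say, the diagonal $\D_X$-flatness of $\mc F^i\otimes_{\O_X}\mc G^j$, and your verification — identifying $(\mc F^i\otimes_{\O_X}\mc G^j)\otimes_{\D_X}(-)$ with the exact composite $\mc F^i\otimes_{\D_X}\bigr{\mc G^j\otimes_{\O_X}(-)}$ via the underived identity — is clean and correct. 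An alternative standard shortcut, which avoids checking flatness for the diagonal structure, is to note that both sides are way-out triangulated functors of $\mc N$ commuting with filtered colimits, reduce to $\mc N$ locally of induced form $\mc N^0\otimes_{\O_X}\D_X$ with $\mc N^0$ $\O_X$-flat, and observe that both sides then collapse to $\mc N^0\ou[L]{\O_X}\otimes\mc M_1\ou[L]{\O_X}\otimes\mc M_2$; your version is a bit more explicit but amounts to the same reduction. No gaps.
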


Now, let $T^*X$ denote the cotangent bundle on $X$. We denote by
$\dmod[coh] X$ the full subcategory of $\dmod X$ whose objects are
coherent over $\D_X$. For $\M\in\dmod[coh] X$ we denote by $\ch\M$ the
characteristic variety of $\M$. Recall that $\ch\M\subset T^*X$ and
that $\mc M$ is said \emph{holonomic} if $\ch\M$ is Lagrangian. One
denotes by $\dmod[h] X$ the full subcategory of $\dmod X$ consisting
of holonomic modules.

\sloppy One denotes by $\bdc[coh]{\D_X}$ (resp. $\bdc[h]{\D_X}$) the
full subcategory of $\dbdx$ consisting of bounded complexes whose
cohomology modules are coherent (resp. holonomic) $\D_X$-modules. For
$\M\in\dbdx[coh]$, set $ \ch\M:= \cup_{j\in\integer}\ch H^j(\M)$.

Let $\pi_X:T^*X\to X$ be the canonical projection, $T^*_XX$ the zero
section of $T^*X$ and $\dot T^*X:=T^*X\setminus T^*_XX$.

For $\M\in\bdc[coh]{\D_X}$, the \emph{singular locus} of $\mc M$ is defined as
$$ S(\M):=\pi_X\Bigr{\ch\M\cap\dot T^*X} \ . $$

It is well known that, if $\M\in\dbdx[h]$, then $S(\M)\neq X$ is a
closed analytic subset of
$X$.

Now, we are going to recall the definition of a regular holonomic
$\D$-module. There are several equivalent definitions (see
\cite[Definition 5.2 and Proposition 5.5]{kashiwara_dmod}), we chose
the following one because, in our opinion, it is the most direct and
easy to state.

\begin{df}
An object $\M\in\dbdx[h]$ is said \emph{regular holonomic} if, for any $x\in X$,
$$ \mathrm{RHom}_{\D_X}(\M,\mathcal{O}_{X,x})\overset{\sim}{\lra} \mathrm{RHom}_{\D_X}(\M,\widehat{\mathcal{O}}_{X,x})  \
, $$ where $\widehat{\mathcal{O}}_{X,x}$ is the $\D_{X,x}$-module of
formal power series at $x$. We denote by $\dbdx[rh]$ the full
subcategory of $\dbdx[h]$ consisting of $\mc M\in\dbdx[h]$ all of
whose cohomology modules are regular.
\end{df}

Now, let $Z$ be a closed analytic subset of $X$. Let $\mc I_Z$ be the
coherent ideal consisting of the holomorphic functions vanishing on
$Z$, for $\mc M\in\dmod X$, one sets
\begin{eqnarray*}
  \Gamma_{[Z]}\mc M&:=&\ou k\varinjlim\ \mcHom[\mathcal{O}_X]{\mathcal{O}_X/\mc I^k_Z}{\mc M} \ ,\\
  \Gamma_{[X\setminus Z]}\mc M&:=&\ou k\varinjlim\ \mcHom[\mathcal{O}_X]{\mc I^k_Z}{\mc M} \ .
\end{eqnarray*}

If $S\subset X$ can be written as $S=Z_1\setminus Z_2$, for $Z_1$ and
$Z_2$ closed analytic sets, then it can be proved that the following
object is well defined

$$ \Gamma_{[S]}\mc M:=\Gamma_{[Z_1]}\Gamma_{[X\setminus Z_2]}\mc M  $$
and that $\Gamma_{[S]}$ is a left exact functor. One denotes by
$R\Gamma_{[S]}$ the right derived functor of $\Gamma_{[S]}$.

\begin{thm}\label{thm:RGamma}
  Let $Z$ be a closed analytic subset of $X$, $S_1,S_2$ differences of
  closed analytic subsets of $X$, $\mc M\in\dbdx$.
  \begin{enumerate}
  \item The following is a distinguished triangle in $\bdc{\D_X}$
 $$ R\Gamma_{[Z]}\mc M\lra\mc M\lra R\Gamma_{[X\setminus Z]}\mc
 M\overset{+1}\lra \ . $$
\item We have $\RGa Z{\mc M}\simeq \RGa Z{\mc {O}}\DDotimes\mc M$ and $\RGa
  {X\setminus Z}{\mc M}\simeq \RGa {X\setminus Z}{\mathcal{O}}\DDotimes\mc M$.
\item We have $\RGa{S_1}{\RGa{S_2}{\mc M}}\simeq \RGa{S_1\cap S_2}{\mc
    M}$.
  \end{enumerate}
\end{thm}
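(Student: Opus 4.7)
The plan is to treat the three parts in the stated order: (i) is immediate from the definition, (ii) is the substantial step, and (iii) follows formally from (ii) combined with a classical $\O_X$-module identity. For (i), I would start from the short exact sequence $0\to\mc I_Z^k\to\O_X\to\O_X/\mc I_Z^k\to 0$ of coherent $\O_X$-modules; applying $\RH[\O_X]{\cdot}{\mc M}$ yields, for every $k$, a distinguished triangle in $\bdc{\D_X}$ with middle term $\mc M$, the $\D_X$-module structure on $\mc M$ being transferred to each term in the standard way. Since filtered colimits of sheaves are exact and preserve distinguished triangles, taking $\varinjlim_k$ produces the desired triangle $\RGa Z{\mc M}\to\mc M\to\RGa{X\setminus Z}{\mc M}\overset{+1}\lra$.

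For (ii), the essential ingredient is that $\O_{X,x}$ is a regular local ring for every $x\in X$, so every coherent $\O_X$-module, in particular $\O_X/\mc I_Z^k$ and $\mc I_Z^k$, is a perfect $\O_X$-module. For any perfect $P$ there is a natural isomorphism $\RH[\O_X]{P}{\mc M}\simeq\RH[\O_X]{P}{\O_X}\ou[L]{\O_X}\otimes\mc M$, compatible with the $\D_X$-action coming from $\mc M$. Applying this with $P=\O_X/\mc I_Z^k$, passing to $\varinjlim_k$, and observing that the resulting $\O_X$-tensor product carries precisely the diagonal $\D_X$-action of the internal tensor product, yields $\RGa Z{\mc M}\simeq\RGa Z{\O}\DDotimes\mc M$; the statement for $X\setminus Z$ follows by the same argument with $P=\mc I_Z^k$, or equivalently from the previous case together with (i). This is the main obstacle of the proof: one must verify that the perfect-module tensor-Hom identification lifts to an isomorphism in $\bdc{\D_X}$ and not merely in $\bdc{\O_X}$.

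For (iii), I would first treat the case $S_1=Z_1$, $S_2=Z_2$ both closed. Two applications of (ii) together with associativity of the diagonal tensor product give
$$\RGa{Z_1}{\RGa{Z_2}{\mc M}}\simeq\RGa{Z_1}{\O}\DDotimes\RGa{Z_2}{\O}\DDotimes\mc M\ ,$$
so the claim reduces to $\RGa{Z_1}{\O}\DDotimes\RGa{Z_2}{\O}\simeq\RGa{Z_1\cap Z_2}{\O}$. On underived functors this is transparent, since an element is killed by a power of both $\mc I_{Z_1}$ and $\mc I_{Z_2}$ iff it is killed by a power of $\mc I_{Z_1}+\mc I_{Z_2}$, which characterises $\Gamma_{[Z_1\cap Z_2]}$; the derived version follows from a Grothendieck spectral sequence, injective $\O_X$-modules being $\Gamma_{[Z_i]}$-acyclic. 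The general case $S_i=Z_{i,1}\setminus Z_{i,2}$ is then deduced from the closed case by unfolding $\RGa{S_i}{\cdot}=\RGa{Z_{i,1}}{\RGa{X\setminus Z_{i,2}}{\cdot}}$ and iterating the distinguished triangle of (i).
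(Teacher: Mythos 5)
The paper does not prove Theorem \ref{thm:RGamma}; it is recalled from the cited references (Kashiwara's book on $\D$-modules and Bj\"ork), so there is no in-paper proof to compare against. On its own merits, your sketch for (i) and (ii) is sound modulo care about the points you partly acknowledge: in (i), for a fixed $k$ the object $\RH[\O_X]{\O_X/\mc I_Z^k}{\mc M}$ has no natural $\D_X$-module structure (only the colimit does, since a vector field applied to a section killed by $\mc I_Z^k$ produces a section killed by $\mc I_Z^{2k}$), and the colimit of triangles must be formed termwise on a suitable resolution rather than naively in the triangulated category; in (ii), the perfectness reduction combined with the compatibility of the diagonal $\D_X$-action on the colimit is the standard route, and you are right that checking the lift from $\bdc{\O_X}$ to $\bdc{\D_X}$ is the real work.

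Part (iii), however, contains a genuine confusion. Having reduced (correctly, by applying (ii) twice) to the tensor identity $\RGa{Z_1}{\O}\DDotimes\RGa{Z_2}{\O}\simeq\RGa{Z_1\cap Z_2}{\O}$, you assert it is transparent on underived functors because an element killed by powers of $\mc I_{Z_1}$ and $\mc I_{Z_2}$ is killed by a power of $\mc I_{Z_1}+\mc I_{Z_2}$. That observation proves the \emph{compositional} identity $\Gamma_{[Z_1]}\Gamma_{[Z_2]}\mc M=\Gamma_{[Z_1\cap Z_2]}\mc M$ for arbitrary $\mc M$, not the tensor identity you have reduced to; moreover the underived tensor identity applied to $\O$ is vacuously $0=0$, since $\Gamma_{[Z_i]}\O_X=0$ for a proper closed $Z_i$ (any $\O_X$-linear map from a torsion module into the integral domain $\O_X$ is zero). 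The repair is to prove the compositional identity $\Gamma_{[Z_1]}\circ\Gamma_{[Z_2]}=\Gamma_{[Z_1\cap Z_2]}$ directly on $\Mod(\D_X)$ -- which your ideal-theoretic remark does yield once you also invoke the Nullstellensatz and coherence so that $\mc I_{Z_1\cap Z_2}^N\subset\mc I_{Z_1}+\mc I_{Z_2}$ locally -- and then derive it, verifying that $\Gamma_{[Z_2]}$ carries injectives to $\Gamma_{[Z_1]}$-acyclics so that $R(\Gamma_{[Z_1]}\circ\Gamma_{[Z_2]})\simeq R\Gamma_{[Z_1]}\circ R\Gamma_{[Z_2]}$. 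This settles (iii) for closed $Z_1,Z_2$ directly, without any detour through (ii); the extension to differences of closed sets by unfolding $\RGa{S_i}{\cdot}$ and iterating (i) is then fine.
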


Given $f\in\mathcal{O}_X$, let $Z:=f^{-1}(0)$. One denotes by
$\mathcal{O}_X[*Z]$ the sheaf of meromorphic functions with poles on
$Z$. Let us remark that $\mathcal{O}_X[*Z]$ is flat over
$\mathcal{O}_X$. Given $\mc M\in\Mod(\D_X)$, 
one can prove that $R\Gamma_{[X\setminus Z]}\mc M\simeq \mc
M\overset{D}\otimes\mathcal{O}_X[*Z] $.

Given two complex analytic manifolds $X$, $Y$ and a holomorphic map
$f:Y\to X$, one denotes by $\dfinv f:\dmod X\to\dmod Y$ the inverse
image functor and by $\Dfinv f:\dbd X\to \dbd Y$ the derived
functor. Moreover, one has $\Dfinv f:\dbd[h]X\to \dbd[h] Y$. Recall
that $f$ is said \emph{smooth} if the corresponding maps of tangent
spaces $T_yY\to T_{f(y)}X$ are surjective for any $y\in Y$. If $f$ is
a smooth map, then $\dfinv f$ is an exact functor.

We conclude describing the behavior of $R\Gamma_{[X\setminus Z]}$
with respect to $\Dfinv{f}$. Proposition
\ref{prop:temp_supp_inv_image} below can be directly obtained using,
for example, Proposition 2.5.27 and Theorem 2.3.17 of \cite{bjork}.

\begin{prop}\label{prop:temp_supp_inv_image}
  Let $f:Y\to X$ be a holomorphic map, $Z$ an analytic subset of $X$,
  $\mc M\in\dbd[h]X$. Then,
$$  \Dfinv f R\Gamma_{[X\setminus Z]} \mc M\simeq R\Gamma_{[Y\setminus f^{-1}(Z)]} \Dfinv f\mc M  \ .$$
\end{prop}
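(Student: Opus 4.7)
The plan is to factor the module $\mc M$ out of the local-cohomology/meromorphic-extension functor and reduce the analytic set $Z$ to a hypersurface, where $\RGa{X\setminus Z}{\mc O_X}$ is represented explicitly by $\mc O_X[*Z]$.

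First, by Theorem \ref{thm:RGamma}(ii) one has
$$\RGa{X\setminus Z}{\mc M}\simeq\RGa{X\setminus Z}{\mc O_X}\DDotimes\mc M,\qquad \RGa{Y\setminus f^{-1}(Z)}{\Dfinv f\mc M}\simeq\RGa{Y\setminus f^{-1}(Z)}{\mc O_Y}\DDotimes\Dfinv f\mc M.$$
The key external input, contained in the Björk references cited by the author, is the compatibility of the derived holomorphic inverse image with the internal derived tensor product,
$$\Dfinv f(\mc N\DDotimes\mc M)\simeq\Dfinv f\mc N\DDotimes\Dfinv f\mc M,$$
which applied to the left-hand side produces $\Dfinv f\RGa{X\setminus Z}{\mc O_X}\DDotimes\Dfinv f\mc M$. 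Matching this with the displayed expression on the $Y$-side reduces the proposition to the universal case $\mc M=\mc O_X$, namely the isomorphism
$$\Dfinv f\RGa{X\setminus Z}{\mc O_X}\simeq\RGa{Y\setminus f^{-1}(Z)}{\mc O_Y}.$$

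This question is local on $X$ and on $Y$, so after shrinking one writes $Z=Z_1\cap\cdots\cap Z_k$ with $Z_j=\{g_j=0\}$ a hypersurface cut out by $g_j\in\mc O_X$. Passing to the dual functor through the distinguished triangle of Theorem \ref{thm:RGamma}(i) (which is natural in $\mc M$ and applies symmetrically on $Y$), it is equivalent to prove $\Dfinv f\RGa{Z}{\mc O_X}\simeq\RGa{f^{-1}(Z)}{\mc O_Y}$. The iteration formula of Theorem \ref{thm:RGamma}(iii) gives $\RGa{Z}{\mc O_X}\simeq\RGa{Z_1}{\RGa{Z_2\cap\cdots\cap Z_k}{\mc O_X}}$, which unwound via part (ii) and the commutation of $\Dfinv f$ with $\DDotimes$ reduces the problem by induction on $k$ to the case of a single hypersurface $Z=\{g=0\}$. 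There $\RGa{X\setminus Z}{\mc O_X}\simeq\mc O_X[*Z]$ is concentrated in degree zero and $\mc O_X$-flat, and a direct computation yields $\Dfinv f\mc O_X[*Z]\simeq\mc O_Y[*f^{-1}(Z)]\simeq\RGa{Y\setminus f^{-1}(Z)}{\mc O_Y}$.

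The delicate point in this outline is not any individual step — each is a formal consequence of the setup — but the bookkeeping of the various $\D_X$- and $\D_Y$-module structures involved in the derived tensor products, and the verification that the quasi-isomorphisms produced at each reduction are natural enough to assemble globally into a single isomorphism functorial in $\mc M$ and independent of the local choice of defining equations $g_j$. The Björk results cited by the author are tailored to this compatibility, which is why the proposition is announced as a direct consequence.
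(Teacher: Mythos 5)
Your reconstruction is correct and is in effect the argument the paper has in mind: the paper gives no proof of this proposition, but only a citation of Proposition 2.5.27 and Theorem 2.3.17 of \cite{bjork}, and these supply exactly the two ingredients you isolate, namely the compatibility of the derived inverse image $\Dfinv f$ with the internal tensor product $\DDotimes$, and the flatness-based identification $\Dfinv f\,\mc O_X[*Z]\simeq\mc O_Y[*f^{-1}(Z)]$ for a hypersurface $Z$. Your reductions via Theorem \ref{thm:RGamma}(ii)--(iii) and your closing remark on the naturality bookkeeping (needed so that the distinguished-triangle comparison determines the third vertex canonically rather than only up to non-unique isomorphism) are appropriate.
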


\subsection{Tempered De Rham complexes}\label{subsection:tempered_de_rham}

We start this subsection by recalling some results of
\cite{ks_indsheaves} on tempered holomorphic functions and tempered De
Rham complexes of holonomic $\D$-modules. Then we prove some general
results on the \reacy\ of the tempered De Rham complex with respect
to the inverse image functors.

Given a complex analytic manifold $X$ of dimension $d_X$, one denotes
by $X_\rea$ the real analytic manifold underlying $X$. Furthermore,
one denotes by $\overline X$ the complex conjugate manifold, in
particular $\mathcal{O}_{\overline X}$ is the sheaf of
anti-holomorphic functions on $X$. Denote by $\Db_{X_\rea}$ the sheaf
of distributions on $X_\rea$ and, for a closed subset $Z$ of $X$, by
$\Gamma_Z(\Db_{X_\rea})$ the subsheaf of sections supported by
$Z$. One denotes by $\dbtxsa$ the presheaf of \emph{tempered
  distributions} on $\xsa$ def\mbox{}ined by
$$\Op^c(\xsa)^{op}\ni U \longmapsto \dbtxsa(U):=\Gamma(X;\dbxr)\big/\Gamma_{X\setminus U}(X;\dbxr) \ .$$
In \cite{ks_indsheaves} it is proved that $\dbtxsa$ is a sheaf on $\xsa$. This
sheaf is well def\mbox{}ined in the category $\mod(\rho_!
\D_X)$. Moreover, for any $U\in\opxsac$, $\dbtxsa$ is
$\Gamma(U,\cdot)$-acyclic.

The sheaf $\dbtxsa$ is strictly related to the Kashiwara's
$\THom(\cdot,\Db_X)$ functor introduced in \cite{kashiwara_79} and
deeply studied \cite{kashiwara_riemann-hilbert}. For the definition
see \cite[Definition 3.13]{kashiwara_riemann-hilbert}. Let us simply
recall \cite[Proposition 7.2.6 \emph{(i)}]{ks_indsheaves}

 \begin{equation}
   \label{eq:dbt_thom}
    R\Ho{F}{\dbtxsa}\simeq \THom(F,\Db_X) \ ,
 \end{equation}
for $F\in\bdc[\rea-c]{\com_X}$.

One def\mbox{}ines the complex of sheaves $\ot_\xsa \in D^b\bigr{\rho_!\D_X}$ 
of tempered holomorphic functions as
\begin{equation}
  \label{eq_def_ot}
  \ot_\xsa:=R\mathcal Hom_{\rho_! \mathcal{D}_{\overline X}}\bigr{\rho_!\mathcal{O}_{\overline X},\dbtxsa}\ .
\end{equation}

It is worth to mention that, if $\dim X=1$, then $  \ot_\xsa $ is
concentrated in degree $0$ and, for $U\in\Op^c(\xsa)$, we have
\begin{equation*}
   \otxsa(U)\simeq\{u\in\mathcal{O}_X(U);\ \exists C,N>0,\ \forall x\in U,\ |u(x)|\leq C\mrm{dist}(x,\partial U)^{-N}\}\ .
\end{equation*}

We also have \cite[Proposition 7.3.2]{ks_indsheaves}
 \begin{equation}
   \label{eq:ot_thom}
    R\Ho{F}{\ot_\xsa}\simeq \THom(F,\mathcal{O}_X) \ ,
 \end{equation}
for $F\in\bdc[\rea-c]{\com_X}$.

Let $\Omega^j_X$ be the sheaf of differential forms of degree $j$. For
sake of simplicity, let us write $\Omega_X$ instead of
$\Omega^{d_X}_X$.

Set
\begin{eqnarray*}
  \omt X & := & \rho_!\Omega_X\ou{\rho_!\mathcal{O}_X}\otimes \otxsa \ .
\end{eqnarray*}

\begin{thm}[\cite{ks_indsheaves} Theorem 7.4.12, Theorem 7.4.1]\label{thm:indsheaves}
  \begin{enumerate}
  \item  Let $\mc L\in\dbdx[rh]$ and set $L:=\RH[\D_X]{\mc L}{\mathcal{O}_X}$. There
  exists a natural isomorphism in $\bdc{\com_\xsa}$
$$  \omt X\ou [L]{\rho_!\mathcal{O}_X}{\otimes}\rho_! \mc L\simeq\RH[\com_\xsa]L{\omt X}  \ . $$
\item Let $X,Y$ be complex manifolds of dimension, respectively, $d_X$
  and $d_Y$; $f:Y\to X$ a holomorphic map and let $\N\in\dbd
  X$. There is a natural isomorphism in $D^b(\com_\ysa)$ 

$$ \omt Y\ou
[L]{\rho_!\D_Y}{\otimes}\rho_! (\Dfinv f \mc N)[d_Y]\overset\sim\lra
f^!(\omt X\ou [L]{\rho_!\D_X}{\otimes}\rho_! \mc N)[d_X] \ .
$$
  \end{enumerate}
\end{thm}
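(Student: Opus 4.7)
For part (i), the strategy is to reformulate both sides in terms of the constructible complex $L$ and then invoke the regular Riemann--Hilbert correspondence. Using (\ref{eq:ot_thom}), the right-hand side reduces (after applying $\rho^{-1}$) to $\THom(L,\Omega_X)$, i.e.\ to tempered top forms paired against $L$. The left-hand side, after unwinding $\omt X = \rho_!\Omega_X \otimes_{\rho_!\mathcal{O}_X} \otxsa$, is the tempered analogue of the top-degree de Rham complex of $\mc L$. For regular holonomic $\mc L$, Kashiwara's theorem \cite{kashiwara_riemann-hilbert} identifies $\mc L$ with $\THom(L, \mathcal{O}_X)$ up to shift, so the two expressions become naturally identified. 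I would formalize this by d\'evissage: verify the isomorphism first for $\mc L = \mathcal{O}_X$ (both sides collapse to $\omt X$), then for $\mc L = \mathcal{O}_X[*Z]$ with $Z$ a normal crossing divisor (using the behaviour of tempered sections against polar factors), and finally extend to arbitrary regular holonomic $\mc L$ via the standard structure of $\dbdx[rh]$ in terms of minimal extensions along strata.

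For part (ii), the plan is to factor $f$ locally as a closed embedding followed by a smooth projection and treat each case separately. For a smooth $f$, the functor $\Dfinv f$ is exact and implemented by the transfer bimodule $\D_{Y\to X}$; the isomorphism then follows from associativity of derived tensor products combined with the relative identification $\omt Y \simeq f^!\omt X[d_Y-d_X]$ on the subanalytic site, a tempered Poincar\'e-type lemma coming from relative de Rham theory. For a closed embedding, the statement is the tempered counterpart of Kashiwara's equivalence between $\D_Y$-modules and $\D_X$-modules supported on $Y$; it is obtained by dualizing the direct image description of $\dbtxsa$ along $Y\hookrightarrow X$ (a quotient by sections supported outside $Y$) and then passing to holomorphic functions via the defining formula (\ref{eq_def_ot}).

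The main obstacle is the analytic control of tempered estimates under the geometric operations. In part (i) the d\'evissage rests on compatibility of $\omt X$ with localization along a divisor, which requires genuine estimates of tempered sections against polar denominators; in part (ii) the control is on pullback by $f$, where tempered bounds can deteriorate near the boundary of subanalytic opens. In both settings these reduce, via Hironaka's Rectilinearization Theorem (Theorem~\ref{thm:hironaka_rectilinearization}) applied respectively to the divisor $Z$ and to the graph of $f$, to elementary quadrant computations for monomial data. Only once these local estimates are secured does the remainder of the argument become formal, being a manipulation of the six operations on subanalytic sheaves together with Proposition~\ref{prop_functors} on the behaviour of $\rho_!$ and $\rho^{-1}$.
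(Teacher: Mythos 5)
This theorem is not proved in the paper you are working from: as the bracketed attribution in the theorem header indicates, it is imported verbatim from Kashiwara--Schapira \cite{ks_indsheaves}, Theorems 7.4.1 and 7.4.12. There is therefore no internal proof to compare your proposal against; what can be assessed is whether your sketch would actually close into a proof of the quoted results, and here two concrete problems arise.

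First, for part (i) you reduce via \eqref{eq:ot_thom} to the identity $\THom(L,\Omega_X)\simeq\mc L\otimes_{\mathcal{O}_X}\Omega_X$ after applying $\rho^{-1}$. But the asserted isomorphism lives in $\bdc{\com_\xsa}$, and $\rho^{-1}$ is \emph{not} faithful (Proposition~\ref{prop_functors} only gives full faithfulness of $\rho_*$ and $\rho_!$): an isomorphism after applying $\rho^{-1}$ is strictly weaker than an isomorphism of subanalytic sheaves, because $\rho^{-1}$ collapses the subanalytic growth conditions precisely where they matter. One must instead exhibit the natural morphism directly on $\xsa$ and test it on sections over every $U\in\Op^c(\xsa)$, i.e.\ on $\THom(\com_U\otimes L,\Db_X)$ for all $U$ at once; this is exactly what makes the tempered case nontrivial compared to the usual Riemann--Hilbert statement. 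Second, and more seriously, everything you put into the ``main obstacle'' paragraph --- the tempered relative Poincar\'e lemma for a smooth projection, the tempered Kashiwara equivalence for a closed embedding, and the tempered Grothendieck comparison for $\mathcal{O}_X[*Z]$ along a normal crossing divisor --- is the actual mathematical content of Theorems 7.4.1 and 7.4.12. Saying these reduce, via rectilinearization, to ``elementary quadrant computations for monomial data'' identifies a plausible reduction strategy, but the resulting monomial estimates (temperate division theorems, temperate primitives with polynomial loss of growth along fibers) are genuinely delicate and are not proved or even stated precisely in your proposal. As written, the sketch locates the difficulties but does not resolve them, so it cannot be accepted as a proof of the cited theorem.
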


For $\mc M\in\bdc{\D_X}$, we set for short

$$
\begin{array}{rcll}
\drt [X]{\mc M } & := & \omt X\ou [L]{\rho_!\D_X}{\otimes}\rho_! \mc M[-d_X]&\in\bdc{\com_\xsa}\ ,\\
 \solt(\M) & := & \RH[\rho_!\D_X]{\rho_!\mc M}{\ot_\xsa}&\in\bdc{\com_\xsa} \ .
\end{array}
$$

For $\mc M\in\dmod[h]X$, set $ \mathbb D_X\mc M:=\mc Ext^{d_X}_{\D_X}(\mc
M,\D_X\otimes_{\mathcal{O}_X}\Omega_X^{\otimes-1}) \ . $

\begin{prop}
  \begin{enumerate}
  \item The functor $\mathbb D_X:\dmod[h]X^{op}\to\dmod[h]X$
    is an equivalence of categories.
  \item Let $\mc M\in\dbdx [h]$. Then,
    \begin{equation}
      \label{eq:solt_drt}
      \solt(\mc M)\simeq\drt[X]{(\mathbb D_X\mc M)}  \ .    
    \end{equation}
\end{enumerate}  
\end{prop}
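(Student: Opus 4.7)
Part (i) is a classical result in holonomic $\D$-module theory, resting on three ingredients: the purity vanishing $\mc Ext^j_{\D_X}(\mc M,\D_X\otimes_{\O_X}\Omega_X^{\otimes -1})=0$ for $j\neq d_X$ whenever $\mc M$ is holonomic (a consequence of the codimension estimate $\mathrm{codim}\,\ch\mc M=d_X$ together with the finiteness of the global dimension of $\D_X$); the preservation of holonomicity by $\mathbb D_X$, checked via a characteristic variety computation; and the biduality isomorphism $\mc M\overset{\sim}{\to}\mathbb D_X\mathbb D_X\mc M$. All three are proved in \cite{kashiwara_dmod} and in \cite{bjork}.

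For Part (ii), the plan is to establish the tempered analogue of the classical identity $R\mc Hom_{\D_X}(\mc M,\O_X)\simeq\Omega_X\ou[L]{\D_X}\otimes\mathbb D_X\mc M[-d_X]$. Setting $\mc K_X:=\D_X\otimes_{\O_X}\Omega_X^{\otimes -1}$, Part (i) combined with the purity vanishing identifies $\mathbb D_X\mc M$ with $R\mc Hom_{\D_X}(\mc M,\mc K_X)[d_X]$. Unfolding the definitions of $\drt[X]{\cdot}$ and $\solt(\cdot)$ and using \eqref{eq_def_ot}, the shifts $[d_X]$ and $[-d_X]$ cancel, and the desired isomorphism reduces to
$$R\mc Hom_{\rho_!\D_X}(\rho_!\mc M,\ot_\xsa)\simeq\omt X\ou[L]{\rho_!\D_X}\otimes\rho_! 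R\mc Hom_{\D_X}(\mc M,\mc K_X).$$

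To prove this I would work locally on $X$ and choose a finite free resolution $\mc L^\bullet\to\mc M$ by $\D_X$-modules of the form $\D_X^{n_i}$, which exists since $\mc M$ is coherent and $\D_X$ has finite global dimension. Exactness of $\rho_!$ (Proposition \ref{prop_functors}) ensures that $\rho_!\mc L^\bullet\to\rho_!\mc M$ is again a resolution by free $\rho_!\D_X$-modules, and both sides of the putative isomorphism are compatible with such resolutions, so the verification reduces to the case $\mc M=\D_X$. On the left one has $R\mc Hom_{\rho_!\D_X}(\rho_!\D_X,\ot_\xsa)=\ot_\xsa$, while on the right
$$\omt X\ou[L]{\rho_!\D_X}\otimes\rho_!\mc K_X\simeq\rho_!\Omega_X\ou[L]{\rho_!\O_X}\otimes\ot_\xsa\ou[L]{\rho_!\O_X}\otimes\rho_!\Omega_X^{\otimes -1}\simeq\ot_\xsa,$$
where we used that $\rho_!$ commutes with tensor products of $\O_X$-modules to see that $\rho_!\Omega_X\otimes_{\rho_!\O_X}\rho_!\Omega_X^{\otimes -1}\simeq\rho_!\O_X$.

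The main technical obstacle is that $\rho_!$ does not commute with $R\mc Hom$ in general, so one cannot simply slide $\rho_!$ through the duality formula. Using a local finite free resolution circumvents this difficulty because on $\D_X^{n_i}$ the functor $\mc Hom_{\D_X}(\cdot,\mc K_X)$ returns $\mc K_X^{n_i}$, and $\rho_!$ passes freely through direct sums and tensor products of $\O_X$-modules. What remains is to check that the termwise identifications are natural enough to assemble into a well defined quasi-isomorphism in $\bdc{\com_\xsa}$, which follows from standard homological machinery once the base case $\mc M=\D_X$ has been settled.
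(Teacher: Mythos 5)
The paper states this proposition without any proof; it is offered as a recollection of standard facts, with part (i) being classical (cf.\ \cite{kashiwara_dmod}, \cite{bjork}) and part (ii) being the tempered version of the usual identification between holomorphic solutions and the De Rham complex of the dual, essentially implicit in the framework of \cite{ks_indsheaves}. There is therefore no in-paper argument to compare against, and the task reduces to checking your reconstruction on its own terms.

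Your argument is sound and is the natural proof. The reduction to $\mc M=\D_X$ by a local finite free $\D_X$-resolution $\mc L^\bullet\to\mc M$ is exactly the right move: exactness of $\rho_!$ and the identification $\rho_!(\D_X^{n})\simeq(\rho_!\D_X)^{n}$ let the same resolution compute $R\mc Hom_{\rho_!\D_X}(\rho_!\mc M,\ot_\xsa)$ on one side and $\rho_! R\mc Hom_{\D_X}(\mc M,\D_X\otimes_{\O_X}\Omega_X^{\otimes -1})$ on the other termwise, while freeness of $\D_X\otimes_{\O_X}\Omega_X^{\otimes -1}$ as a left $\D_X$-module makes the derived tensor with $\omt X$ underived on each term. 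The two points you pass over quickly are both minor and standard: that $\rho_!$ commutes with $\otimes_{\O_X}$ by a locally free $\O_X$-module of finite rank (which is why $\rho_!\Omega_X\otimes_{\rho_!\O_X}\rho_!\Omega_X^{\otimes -1}\simeq\rho_!\O_X$ and why $\rho_!\mc K_X$ is the expected side-change bimodule over $\rho_!\D_X$), and the bookkeeping of the two commuting left $\D_X$-actions on the bimodule $\D_X\otimes_{\O_X}\Omega_X^{\otimes -1}$ (one consumed by the $\mc Hom$, the other surviving to give the module structure of the answer), which proceeds exactly as in the classical non-tempered computation. With those two checks in place, the termwise isomorphisms are natural in the resolution and glue to the asserted quasi-isomorphism.
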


We are now going to recall a conjecture of M. Kashiwara and
P. Schapira on the $\rea$ subanalytic constructibility of tempered
holomorphic solutions of holonomic $\D$-modules and the result we
obtained on curves.

\begin{conj}[\cite{ks_microlocal_indsheaves}]\label{conj:constr} Let
  $\mc M\in\bdc[h]{\D_X}$. Then $\solt(\M)\in\bdc{\com_\xsa}$ is $\rea$-sa-constructible.
\end{conj}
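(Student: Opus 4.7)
I would proceed by induction on $\dim X$ (with $\dim X = 0$ being trivial), and by a routine truncation argument on $\bdc[h]{\D_X}$ reduce to the case of a single module $\mc M \in \dmod[h]X$. Setting $S := S(\mc M)$ one has $\dim S < \dim X$, so applying $\solt$ to the distinguished triangle of Theorem~\ref{thm:RGamma}(i) gives
$$ \solt(R\Gamma_{[X\setminus S]}\mc M) \to \solt(\mc M) \to \solt(R\Gamma_{[S]}\mc M) \overset{+1}\to, $$
and since $\rea$-sa-constructibility is stable under cones, the proof splits into (A) the meromorphic connection $R\Gamma_{[X\setminus S]}\mc M \simeq \mc M \Dotimes \mc O_X[*S]$, and (B) the module $R\Gamma_{[S]}\mc M$, holonomic and supported on $S$.

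For case (B), I would exploit $\dim S < \dim X$ together with the inductive hypothesis. Choose a resolution of singularities of $S$, producing a proper map $\iota : \tilde S \to X$ with $\tilde S$ smooth of dimension $\dim S$ and $\iota(\tilde S) = S$. Iterating the same localisation triangle along the (subanalytic) singular strata of $S$ reduces to the case in which $R\Gamma_{[S]}\mc M$ is a $\D$-module direct image via $\iota$ of a holonomic module on $\tilde S$. By induction, tempered solutions are $\rea$-sa-constructible on $\tilde S_{sa}$; Theorem~\ref{thm:indsheaves}(ii) combined with properness of $\iota$ (and with the behavior of $R\rho_*$ on the subanalytic site recalled in Proposition~\ref{prop_functors}) then transports this $\rea$-sa-constructibility to $\xsa$.

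For case (A) one applies the Mochizuki--Kedlaya resolution of turning points: there is a proper birational $p : \tilde X \to X$, an isomorphism over $X\setminus S$, such that $\tilde S := p^{-1}(S)$ is a simple normal crossings divisor and $\Dfinv p (\mc M \Dotimes \mc O_X[*S])$ admits a good formal decomposition along $\tilde S$. Theorem~\ref{thm:indsheaves}(ii) together with properness of $p$ transfers the constructibility question to $\tilde X_{sa}$. Working locally at a point of $\tilde S$, the Rectilinearization Theorem~\ref{thm:hironaka_rectilinearization} (applied to a local equation of $\tilde S$) presents any relatively compact subanalytic open set as a finite union of quadrants in real-analytic coordinates. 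On each quadrant the good formal decomposition lifts to an actual analytic decomposition (the Hukuhara--Turrittin--Mochizuki--Sabbah sectorial asymptotic theory), reducing the calculation of tempered solutions to the elementary factors $\mc E^\varphi$ twisted by regular holonomic modules; for these the condition of temperedness is controlled by the sign of $\re \varphi$, a subanalytic function of the coordinates, which produces a finite subanalytic stratification of the quadrant over which the tempered solution sheaves are locally constant.

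The main obstacle sits in case (A): organising the sectorial asymptotic lifts across the quadrants so that the resulting stratification is genuinely subanalytic and finite, and then verifying Definition~\ref{df_constructible}, i.e.\ that $\RH[\com_X]{G}{\solt(\mc M)}$ lies in $\bdc[\rea-c]{\com_X}$ for every $G \in \bdc[\rea-c]{\com_X}$, rather than merely showing some weaker local-constancy statement. A secondary technical point is the descent of $\rea$-sa-constructibility under the proper pushforward $p$, which must be handled within the $\rho_!,\rho_*$ formalism of the subanalytic site and not naively on $X$.
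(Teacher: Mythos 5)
Your overall plan—split along the localization triangle into a meromorphic part and a part supported on the singular locus, resolve turning points by blowing up and ramifying for the meromorphic part, and induct for the supported part—matches the paper's strategy, which passes through Theorem~\ref{thm:main} via the duality $\solt(\mc M)\simeq\drt[X]{(\mathbb D_X\mc M)}$. Case (A) is handled in the paper essentially as you sketch (modification plus ramification to reach an elementary $\mc A$-decomposition, then rectilinearization), except that instead of a stratification governed by the sign of $\re\varphi$, the paper pushes down via a real analytic map $f:\rea^{2n}\to\rea$ sending $\psi$ to $1/t$ and exploits the explicit solvability of $t^2\frac{d}{dt}-1$ in tempered distributions on the line.

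Case (B) is where you have a genuine gap. First, your induction is on $\dim X$, but for the part of $R\Gamma_{[S]}\mc M$ localized on the singular stratum $S_s\subset S$ the ambient manifold stays $X$, so $\dim X$ does not decrease; the paper instead inducts on $d:=\dim S(\mc M)$, with the hypothesis quantified over \emph{all} complex manifolds $Y$, and for $S_s$ observes that $\RGa{S_s}{\mc M}$ has singular support of dimension $<d$. Second, the claim that after iterating the localization triangle $R\Gamma_{[S]}\mc M$ becomes a $\D$-module \emph{direct image} under $\iota:\tilde S\to X$ of a holonomic module on $\tilde S$ is not substantiated and is false in general: Kashiwara's equivalence applies only over a \emph{smooth} closed submanifold, and no compatibility of $\drt$ with $\D$-module pushforward is available in the paper's toolkit (Theorem~\ref{thm:indsheaves}\emph{(ii)} is a statement about $\Dfinv f$, not about direct images). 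The paper circumvents this by working entirely on the sheaf side: it writes $\com_{U\cap Z_r}\simeq R\pi_{Z'\,!}\com_V$ for the strict transform $\pi_{Z'}:Z'\to Z$, applies the $(R\pi_!,\pi^!)$-adjunction, and then uses Theorem~\ref{thm:indsheaves}\emph{(ii)} to move $\pi^!$ inside the tempered De Rham complex, producing $\drt[Z']{(\Dfinv{\pi_{Z'}}\RGa Z{\mc M})}$ on $Z'$, whose singular support has dimension $<d$; the $\D$-module is transported by \emph{inverse} image throughout, never presented as a pushforward.
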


In \cite{morando_existence_theorem}, we proved that Conjecture
\ref{conj:constr} is true on analytic curves.

\begin{thm}\label{thm:r-c_dim1}
Let $X$ be a complex curve and $\M\in\bdc[h]{\D_X}$. Then, 
$\solt(\mc M)$ is $\rea$-sa-constructible.
\end{thm}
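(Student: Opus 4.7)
\emph{Proof plan.} The property of $\rea$-sa-constructibility is local on $\xsa$ and can be tested locally on $X$, so I fix a point $x_0\in X$ and work in a small relatively compact open subanalytic disk $U$ containing $x_0$. Since $S(\mc M)$ is a discrete subset of the curve $X$, I may assume after shrinking $U$ that $S(\mc M)\cap U\subset\{x_0\}$, and I write $Z=\{x_0\}$.

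First I apply the distinguished triangle of Theorem \ref{thm:RGamma}(i) to split $\mc M$ as
\[
R\Gamma_{[U\setminus Z]}\mc M\to\mc M\to R\Gamma_{[Z]}\mc M\overset{+1}\to,
\]
and, since $\solt$ is triangulated, handle the two pieces separately. For $R\Gamma_{[Z]}\mc M$, supported at the point $x_0$, the holonomic structure theorem in dimension one expresses this as a finite sum of modules of algebraic local cohomology type, whose tempered solutions are easily seen to satisfy Definition \ref{df_constructible}. For $R\Gamma_{[U\setminus Z]}\mc M\simeq\mc M\overset{D}\otimes\mathcal{O}_X[*Z]$, away from $x_0$ the module is a flat connection on a vector bundle, so after applying $\rho^{-1}$ its tempered solutions coincide with the locally constant sheaf of horizontal sections, which is plainly $\rea$-constructible.

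The delicate case is the behavior near $x_0$ of $\solt(\mc M')$ for $\mc M':=R\Gamma_{[U\setminus Z]}\mc M$. Here I invoke Hukuhara--Levelt--Turrittin: after a finite ramified cover $\pi:(\tilde U,\tilde x_0)\to(U,x_0)$ of some order $e\geq 1$ and after localization at $\tilde x_0$, one has a formal decomposition
\[
\Dfinv{\pi}\mc M'\simeq\bigoplus_{i\in I}\mc E^{\phi_i}\otimes\mc R_i,
\]
with $\phi_i$ meromorphic and $\mc R_i$ regular meromorphic, which lifts asymptotically on each sector of the real blow-up. By Theorem \ref{thm:indsheaves}(ii) applied to the finite map $\pi$, the tempered de Rham complex of $\mc M'$ is recoverable from that of $\Dfinv{\pi}\mc M'$, so it is enough to verify the claim summand by summand. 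For each $\mc E^{\phi_i}\otimes\mc R_i$ the stalks of tempered solutions are governed by the Stokes phenomenon: they are constant and finite-dimensional on the open \emph{Stokes sectors} $\{z:\re\phi_i(z)<-c\}$ for $c\gg 0$, and controlled by the regular factor on the opposite sectors. Since these sectors are semi-analytic in local coordinates they are in particular subanalytic, and the Stokes stratification near $x_0$ is subanalytic.

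The main obstacle is to pass from this stalkwise, sectorial description to the full testing property of Definition \ref{df_constructible}: one must show that for \emph{every} $G\in\bdc[\rea-c]{\com_X}$ the complex $\RH[\com_X]{G}{\solt(\mc M)}$ is $\rea$-constructible on $X$. Via (\ref{eq:ot_thom}) this reduces to the $\rea$-constructibility of
\[
\RH[\D_X]{\mc M}{\THom(G,\mathcal{O}_X)}.
\]
Combining the Stokes sectorial stratification with the subanalytic stratification underlying $G$, and applying Hironaka's Rectilinearization (Theorem \ref{thm:hironaka_rectilinearization}) to the real and imaginary parts of the $\phi_i$ in a chart, I expect to produce a common subanalytic stratification of $U$ on which the cohomology sheaves are locally constant with finite-dimensional stalks, yielding the conclusion.
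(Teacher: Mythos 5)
The paper does not prove Theorem~\ref{thm:r-c_dim1} at all: it is quoted as an earlier result, with the proof deferred entirely to \cite{morando_existence_theorem}. So there is no ``paper's own proof'' to compare against here; the fair comparison is with the general-dimension argument (Theorem~\ref{thm:main}, Section~2), which specializes to curves.

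Your outline (localize, split via $R\Gamma_{[Z]}$ versus $R\Gamma_{[X\setminus Z]}$, ramify, reduce to elementary models $\mc L^\phi\otimes\mc R$) is the right skeleton and matches the paper's higher-dimensional strategy. The descent step from $\Dfinv\pi\mc M'$ to $\mc M'$ is genuinely usable but you state it too loosely: Theorem~\ref{thm:indsheaves}(ii) alone does not let you recover the tempered De Rham complex of $\mc M'$ downstairs; you need the mechanism of Proposition~\ref{prop:constructibility_inv_im}(ii), namely the splitting $Rf_!f^{-1}\com_{U\setminus Z}\simeq\com_{U\setminus Z}\oplus L$ for the finite covering, so that constructibility of $R\Ho[\com_X]{\com_{U\setminus Z}\oplus L}{\drt[X]{\mc M'}}$ yields constructibility of the summand you care about. (Also, as a cosmetic point, your triangle has the two $R\Gamma$ terms in the wrong order relative to Theorem~\ref{thm:RGamma}(i); harmless but worth fixing.)

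The genuine gap is in your last paragraph, which is not a proof but a plan (``I expect to produce\dots''). The difficulty you correctly identify --- passing from a sectorial/Stokes description of the solution sheaf to the full testing property of Definition~\ref{df_constructible} --- is exactly the hard part, and you do not close it. Knowing the asymptotic behavior of $e^{\phi_i}$ on Stokes sectors tells you about stalks of the classical solution sheaf on $X$, but $\rea$-sa-constructibility of $\solt(\mc M)$ is a statement about a sheaf on $\xsa$ tested against arbitrary relatively compact subanalytic opens $V$, which can be cusps, spirals, or far worse than sectors; the sectorial stratification does not by itself produce the needed constructibility. The way the paper closes this (in Lemma~\ref{lem:constructibility_elem_model} for the general case) is to combine Rectilinearization with a pushforward along $f(y)=a^{-1}y_1^{k_1}\cdots y_{2n}^{k_{2n}}$ to reduce the whole computation to $R\mathrm{Hom}_{\D_\rea}(\mc L^{-1/t},\THom(\com_{f(V\cap B)},\Db))$ on the real line, and then invokes the explicit solvability and finite-dimensional kernel/cokernel of $t^2\tfrac{d}{dt}-1$ on tempered distributions. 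Some concrete statement of this kind --- an actual finiteness result for an ODE acting on tempered distributions over subanalytic opens --- is what is missing from your argument; without it the $\rea$-constructibility of $\RH[\D_X]{\mc M}{\THom(G,\mathcal O_X)}$ for arbitrary $G$ is not established.
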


Later we will use a version of the conjecture using the functor
$\THom$ instead of the complex of sheaves $\ot$. Let us show how to
pass from one to the other with the following easy

\begin{lem}
 For $G\in\bdc[\rea-c]{\com_X}$ and $\mc
M\in\bdc[h]{\D_X}$ we have
\begin{equation}    \label{eq:solutions_ot_thom}
  R\Ho[\com_\xsa]{G}{\solt(\mc M)} \simeq  R\Ho[\D_X]{\mc M}{\THom_{\com_\xsa}(G,\mathcal{O}_X)} \ .
\end{equation}
\end{lem}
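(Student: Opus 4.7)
My plan is to prove this by the standard commutation of $R\mathcal Hom$ functors, together with the identification \eqref{eq:ot_thom} and the adjunction $\rho_!\dashv\rho^{-1}$. The statement is essentially formal once one keeps track of what lives on $X$ and what lives on $\xsa$.

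First, I would unwind the definition of $\solt(\mc M)$:
$$\solt(\mc M)=R\Ho[\rho_!\D_X]{\rho_!\mc M}{\ot_\xsa}\ .$$
Then I would apply the general commutation isomorphism
$$R\Ho[\com]{G}{R\Ho[\rho_!\D_X]{\mc N}{\mc F}}\simeq R\Ho[\rho_!\D_X]{\mc N}{R\Ho[\com]{G}{\mc F}}\ ,$$
valid whenever the right-hand $\rho_!\D_X$-module structure on $R\Ho[\com]{G}{\mc F}$ is induced by the one on $\mc F$. Taking $\mc N=\rho_!\mc M$ and $\mc F=\ot_\xsa$ this gives
$$R\Ho[\com_\xsa]{G}{\solt(\mc M)}\simeq R\Ho[\rho_!\D_X]{\rho_!\mc M}{R\Ho[\com]{G}{\ot_\xsa}}\ .$$

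Next, since $G\in\bdc[\rea-c]{\com_X}$, the inner term is computed by \eqref{eq:ot_thom}, giving $R\Ho[\com]{G}{\ot_\xsa}\simeq \THom(G,\mathcal O_X)$ as complexes of $\D_X$-modules. Finally, the adjunction $\rho_!\dashv\rho^{-1}$ (together with the fully faithfulness of $\rho_!$ from Proposition \ref{prop_functors}) yields
$$R\Ho[\rho_!\D_X]{\rho_!\mc M}{\mc H}\simeq R\Ho[\D_X]{\mc M}{\rho^{-1}\mc H}\ ,$$
for any $\mc H$ of $\rho_!\D_X$-modules, so that combining the two identifications produces the desired isomorphism with $R\Ho[\D_X]{\mc M}{\THom(G,\mathcal O_X)}$ on the right-hand side.

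The only real issue is bookkeeping: one must be careful that \eqref{eq:ot_thom} is compatible with the $\D_X$-module structures on both sides (which follows from Kashiwara's construction of $\THom(\cdot,\mathcal O_X)$ and from the construction of $\ot_\xsa$ in \eqref{eq_def_ot}), and that the commutation of $R\Ho[\com]{\cdot}{\cdot}$ with the outer $R\Ho[\rho_!\D_X]{\cdot}{\cdot}$ respects the module structures. Since both manipulations are standard once the shorthand convention $R\Ho[\com_X]{G}{F}:=\rho^{-1}R\Ho[\com_\xsa]{R\rho_*G}{F}$ is used consistently, no serious obstacle arises, and the lemma follows without any genuine analytic input beyond \eqref{eq:ot_thom}.
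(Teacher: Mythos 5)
Your proposal follows essentially the same route as the paper: tensor--hom commutation to move $G$ inside, \eqref{eq:ot_thom} to identify the inner term with $\THom(G,\mathcal O_X)$, and then passage from $\rho_!\D_X$-modules on $\xsa$ to $\D_X$-modules on $X$. One small caution: the isomorphism $\rho^{-1}R\Ho[\rho_!\D_X]{\rho_!\mc M}{\mc H}\simeq R\Ho[\D_X]{\mc M}{\rho^{-1}\mc H}$ you invoke (note the $\rho^{-1}$ on the left, which is needed for the two sides to live on the same space) is not an immediate formal consequence of the adjunction $\rho_!\dashv\rho^{-1}$ alone; it is a sheaf-level compatibility statement, and the paper cites \cite[Proposition 1.1.16]{prelli_subanalytic_sheaves} for it rather than leaving it as formal bookkeeping.
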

\begin{proof}
We have the following sequence of isomorphisms
\begin{equation*}
  \rho^{-1}R\Ho[\com_\xsa]{\rho_*G}{R\Ho[\rho_!\D_X]{\rho_!\mc M}{\ot_\xsa}} \simeq 
\qquad\qquad\qquad\qquad\qquad\qquad\phantom{a}
\end{equation*}
\vspace{-10mm}
\begin{eqnarray*}
\phantom{a}\qquad\qquad\qquad\qquad\qquad & \simeq & 
\rho^{-1}R\Ho[\rho_!\D_X]{\rho_*G\otimes_{\com_\xsa}\rho_!\mc M}{\ot_\xsa} \\
&\simeq &\rho^{-1}R\Ho[\rho_!\D_X]{\rho_!\mc M}{R\Ho[\com_\xsa]{\rho_*G}{\ot_\xsa}}\\
&\simeq &R\Ho[\D_X]{\mc M}{\rho^{-1}R\Ho[\com_\xsa]{\rho_*G}{\ot_\xsa}} \\
&\simeq & R\Ho[\D_X]{\mc M}{\THom_{\com_\xsa}(G,\mathcal{O}_X)} \ ,
\end{eqnarray*}
where we used \cite[Proposition 1.1.16]{prelli_subanalytic_sheaves} in
the third isomorphism.
\end{proof}

As it will be useful later, let us give an explicit form to
$R\Ho[\com_X]{\com_U}{\drt [X]{\mc M}}$, for $U\in\Op^c(\xsa)$, $\mc
M\in\dmod[h]X$.

Recall that one denotes by $X_\rea$ the real analytic manifold
underlying $X$, by $\mc C^\omega_{X_\rea}$ the sheaf of real analytic
functions on $X_\rea$, by $\overline X$ the complex conjugate manifold
of $X$, by $\mathcal{O}_{\overline X}$ (resp. $\mc D_{\overline X}$,
$\Omega_{\overline X}$) the sheaf of holomorphic functions
(resp. linear differential operators with coefficients in
$\mathcal{O}_{\overline X}$, maximal degree differential forms) on $\overline
X$. Furthermore, we denote by $\Omega_X^\bullet$
(resp. $\Omega^{\bullet,\bullet}_{X_\rea}$) the complex (resp. double
complex) of sheaves of differential forms on $X$ (resp. $X_\rea$) with
coefficients in $\{\mathcal{O}_X$ (resp. $\mc C^\omega_{X_\rea}$).

Now, since

\begin{eqnarray*}
  R\Ho[\com_X]{\com_U}{\drt [X]{\mc M}} & \simeq & R\Ho[\D_{X_\rea}]{\mbb D\mc M\underset{\mathcal{O}_X}\otimes\mc C^\omega}{\THom(\com_U,\Db_{X})}  \\
  & \simeq &  \THom(\com_U,\Db_{X})\underset{\mc C^\omega}\otimes\Omega_{X_\rea}^{\bullet,\bullet}\underset{\mathcal{O}_X}\otimes\mbb D\mc M \ ,
\end{eqnarray*}
we have that $R\Ho[\com_X]{\com_U}{\drt [X]{\mc M}}$ is isomorphic to
the total complex relative to the double complex

\begin{equation}\label{eq:de_rham_explicit}
  \xymatrix{
& 0\ar[d] & 0\ar[d] &
\\
0\ar[r] 
&  \THom(\com_U,\Db_{X})\ou{\mc C^\omega_{X_\rea}}{\otimes} \Omega_{X_\rea}^{0,0}\ou{\mathcal{O}_X}{\otimes}\mbb D\mc M\ar[r]^{\nabla} \ar[d]^{\overline\partial}
& \THom(\com_U,\Db_{X})\ou{\mc C^\omega_{X_\rea}}{\otimes} \Omega_{X_\rea}^{1,0}\ou{\mathcal{O}_X}{\otimes}\mbb D\mc M\ar[r]^{\phantom{asfdasdfasdfaaa}\nabla} \ar[d]^{\overline\partial}
& \ldots
\\
0\ar[r] 
& \THom(\com_U,\Db_{X})\ou{\mc C^\omega_{X_\rea}}{\otimes} \Omega_{X_\rea}^{0,1}\ou{\mathcal{O}_X}{\otimes}\mbb D\mc M\ar[r]^{\nabla} \ar[d]^{\overline\partial}
& \THom(\com_U,\Db_{X})\ou{\mc C^\omega_{X_\rea}}{\otimes} \Omega_{X_\rea}^{1,1}\ou{\mathcal{O}_X}{\otimes}\mbb D\mc M\ar[r]^{\phantom{asfdasdfasdfaaa}\nabla} \ar[d]^{\overline\partial}
& \ldots
\\
& \ldots &\ldots &   
}
\end{equation}

where the $\nabla$ above, in a local coordinate system $z:V\to \com^{n}$, is defined by 
$$\nabla(f\otimes\omega\otimes m)=\ou[n]{j=1}{\sum}\partial_{z^j}f\otimes dz^j\wedge\omega\otimes m+f\otimes d\omega\otimes m+f\otimes\ou[n]{j=1}\sum dz^j\wedge\omega\otimes \partial_{z^j}m  $$
for $f\in\dbtxsa$, $\omega\in\Omega^{k,h}_{X_\rea}$ and $m\in\mbb D\mc M$.

Now, we prove some general results on the behaviour of the \reacy\ of
a tempered De Rham complex with respect to the tensor product and the
inverse image functor. We obtained similar results in
\cite{morando_preconstructibility}, we adapted the proofs to the
present case.

\begin{lem}
  \label{lem:regular_twist}
Let $\mc M\in\dbdx[h]$ and $\mc R\in\dbdx[rh]$.  Set
  $L:=R\Ho[\D_X]{\mc R}{\mathcal{O}_X}\in\bdc[\rea-c]{\com_X}$. For
  any $G\in\bdc[\rea-c]{\com_X}$, one has that
  \begin{equation}
    \label{eq:standard_passage}
    R\Ho[\com_X]{G}{\drt[X]{(\mc M\overset{\mbb D}\otimes\mc R)}}  \simeq  R\Ho[\com_X]{G\ou{\com_X}\otimes L}{\drt[X]{\mc M}}
  \end{equation}

  In particular, if $\drt [X]{\mc M}$ is $\rea$-sa-constructible, then
  $\drt[X]{(\mc M\overset{\mbb D}\otimes\mc R)}$ is
  $\rea$-sa-constructible.

\end{lem}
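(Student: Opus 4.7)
The plan is to move the regular holonomic twist $\mc R$ out of the tempered De Rham complex and reabsorb its holomorphic solution complex $L$ into the $\mathrm{Hom}$-source; once this is done the conclusion becomes a direct application of the hypothesis on $\drt[X]{\mc M}$.

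First, I would unwind the tempered De Rham as $\drt[X]{(\mc M\DDotimes\mc R)}=\omt X\otimes^L_{\rho_!\D_X}\rho_!(\mc M\DDotimes\mc R)[-d_X]$ and apply Proposition~\ref{prop:tensor_commutation}, transported to the subanalytic site through the exact functor $\rho_!$, to obtain
$$\drt[X]{(\mc M\DDotimes\mc R)}\simeq (\omt X\otimes^L_{\rho_!\mathcal{O}_X}\rho_!\mc R)\otimes^L_{\rho_!\D_X}\rho_!\mc M\,[-d_X].$$
Since $\mc R\in\dbdx[rh]$, Theorem~\ref{thm:indsheaves}(i) identifies the first factor with $\RH[\com_\xsa]{L}{\omt X}$, and then commuting $\RH[\com_\xsa]{L}{-}$ past the tensor over $\rho_!\D_X$ gives
$$\drt[X]{(\mc M\DDotimes\mc R)}\simeq \RH[\com_\xsa]{L}{\drt[X]{\mc M}}.$$
This last commutation is legitimate because $L$ is a complex of $\com_X$-modules carrying no $\D_X$-structure, so the $\rho_!\D_X$-action on the internal Hom originates entirely from $\omt X$.

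I would then combine this with the standard tensor-hom adjunction on $\xsa$ and the fact that $R\rho_*$ is symmetric monoidal on $\bdc[\rea-c]{\com_X}$:
\begin{align*}
R\Ho[\com_X]{G}{\drt[X]{(\mc M\DDotimes\mc R)}} &\simeq \rho^{-1}R\Ho[\com_\xsa]{R\rho_*G}{\RH[\com_\xsa]{R\rho_*L}{\drt[X]{\mc M}}}\\
&\simeq \rho^{-1}R\Ho[\com_\xsa]{R\rho_*G\otimes R\rho_*L}{\drt[X]{\mc M}}\\
&\simeq R\Ho[\com_X]{G\ou{\com_X}\otimes L}{\drt[X]{\mc M}},
\end{align*}
which is the required isomorphism \eqref{eq:standard_passage}. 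For the ``in particular'' assertion, since $L\in\bdc[\rea-c]{\com_X}$ the tensor product $G\ou{\com_X}\otimes L$ is again $\rea$-constructible whenever $G$ is, so the $\rea$-sa-constructibility of $\drt[X]{\mc M}$ transfers through the displayed isomorphism by applying Definition~\ref{df_constructible}.

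The main obstacle I foresee is the interchange $\RH[\com_\xsa]{L}{-}\otimes^L_{\rho_!\D_X}\rho_!\mc M\simeq \RH[\com_\xsa]{L}{-\otimes^L_{\rho_!\D_X}\rho_!\mc M}$. Morally this is the projection formula combined with the disjointness of the $\com$- and $\D$-actions, but a rigorous justification in the derived category of sheaves on $\xsa$ may require passing to a local finite free presentation of the constructible sheaf $L$; everything else is manipulation of adjunctions and invocation of the cited structural theorems.
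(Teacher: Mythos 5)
Your argument is correct and follows essentially the same route as the paper: apply Proposition~\ref{prop:tensor_commutation}, then Theorem~\ref{thm:indsheaves}(i) to convert $\omt X\ou[L]{\rho_!\mathcal{O}_X}\otimes\rho_!\mc R$ into $\RH[\com_\xsa]{L}{\omt X}$, commute the internal Hom past the tensor over $\rho_!\D_X$ with $\rho_!\mc M$, and finish with tensor-Hom adjunction. The commutation step you flag as the possible obstacle is exactly the step the paper performs without comment, so your caution there is well placed but does not mark a gap in either argument.
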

\begin{proof} 
 
 Let
  $G\in\bdc[\rea-c]{\com_X}$, the following sequence of
  isomorphisms proves \eqref{eq:standard_passage}
 \begin{eqnarray*}
   R\Ho[\com_X]{G}{\omt X\ou[L]{\rho_!\D_X}\otimes \rho_!(\mc M\overset{\mbb D}\otimes\mc R)}  &\simeq &   R\Ho[\com_X]{G}{(\omt X\ou[L]{\rho_!\mathcal{O}_X}\otimes\rho_!\mc R)\ou[L]{\rho_!\D_X}\otimes\rho_!\mc M} \\
 &\simeq &   R\Ho[\com_X]{G}{R\Ho[\com_\xsa]L{\omt X}\ou[L]{\rho_!\D_X}\otimes\rho_!\mc M} \\
 &\simeq &   R\Ho[\com_X]{G}{R\Ho[\com_\xsa]L{\omt X\ou[L]{\rho_!\D_X}\otimes\rho_!\mc M}} \\
 &\simeq &   R\Ho[\com_X]{G\ou{\com_X}\otimes L}{\omt X\ou[L]{\rho_!\D_X}\otimes\rho_!\mc M}  \ .\\
  \end{eqnarray*}

In the previous series of isomorphisms we used Proposition
  \ref{prop:tensor_commutation} in the first isomorphism and Theorem
  \ref{thm:indsheaves}(i) in the second isomorphism.
\end{proof}

Let us recall two definitions.

\begin{df}\label{df:modification_ramification}
  \begin{enumerate}
  \item 
Given a closed analytic set $Z\subset X$, a morphism $f:Y\to X$
  of analytic manifolds is said to be a \emph{modification with
    respect to }$Z$, or simply a modification, if it is proper and if
  $f|_{Y\setminus f^{-1}(Z)}$ is an isomorphism on $X\setminus Z$.
\item Suppose that $X\simeq\com^n$ and $Z$ is defined by the
equation $x_1\cdot\ldots\cdot x_k=0$. 
 By a \emph{ramification map fixing} $Z$ we mean a map
\begin{equation*}
      \begin{array}{rccl}
        \rho_l: & \com^n & \lra & \com^n \\
        & (t_1\ldots,t_n) & \longmapsto & (t_1^l\ldots,t_k^l,t_{k+1}\ldots,t_n) \ ,
      \end{array}
    \end{equation*}
    for some $l\in\integergz$.
  \end{enumerate}  
\end{df}

\begin{prop}\label{prop:constructibility_inv_im}
  \begin{enumerate}
  \item Given $Z\neq X$ a closed analytic hypersurface of $X$, $f:Y\to
    X$ a modification with respect to $Z$, $\mc M\in\bdc{\D_X}$ such
    that $R\Gamma_{[X\setminus Z]}\mc M\simeq\mc M$ and
    $G\in\bdc[\rea-c]{\com_X}$, we have 
    \begin{multline}
      \label{eq:inverse_image_modification}
    R\Ho[\com_X]{G}{\drt[X]{\mc M}}
 \simeq\\ 
\simeq Rf_*  R\Ho[\com_{Y}]{f^{-1}(G\otimes\com_{X\setminus Z})}{\drt[Y]{\Dfinv f\mc M}}
\ . 
    \end{multline}
    In particular, if $\drt[Y]{\Dfinv f\mc M}$ is \reac, then
    $\drt[X]{\mc M}$ is \reac.
\item Suppose we are in the situation of Definition
  \ref{df:modification_ramification} (ii).  Let $f:Y\to X$ be a
  ramification fixing $Z$, $\mc M\in\bdc{\D_X}$ such that
  $R\Gamma_{[X\setminus Z]}\mc M\simeq\mc M$, $U\in\Op^c(\xsa)$. There exists a local system $L$ on $X\setminus Z$ such that the following isomorphism holds 

  \begin{multline}    \label{eq:inverse_image_ramification}
        Rf_*R\Ho[\com_{Y}]{f^{-1}(\com_{U\setminus Z})}{\drt[Y]{\Dfinv f\mc M}}    
\simeq\\
\simeq R\Ho[\com_X]{\com_U\oplus L}{ \drt[X]{\mc M}}  \ .
  \end{multline}
In particular, if $\drt[Y]{\Dfinv f\mc M}$ is \reac, then
    $\drt[X]{\mc M}$ is \reac.
  \end{enumerate}
\end{prop}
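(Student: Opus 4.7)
The strategy for both parts is to combine Theorem \ref{thm:indsheaves}(ii) with the $(Rf_!, f^!)$-adjunction (which collapses to $(Rf_*, f^!)$ since $f$ is proper) and the projection formula, then to absorb the difference between $G$ and $G\otimes\com_{X\setminus Z}$ via the localization of $\mc M$ away from $Z$. Since both a modification and a ramification preserve complex dimension, $d_X=d_Y$, and substituting the definition of $\drt$ in Theorem \ref{thm:indsheaves}(ii) yields
\begin{equation*}
\drt[Y]{\Dfinv f\mc M}\simeq f^!\,\drt[X]{\mc M}.
\end{equation*}
Applying $Rf_*R\Ho[\com_Y]{f^{-1}H}{\cdot}$ and using the adjunction in the form $Rf_*R\Ho[\com_Y]{F}{f^!K}\simeq R\Ho[\com_X]{Rf_!F}{K}$ together with $Rf_!=Rf_*$, one obtains, for every $H\in\bdc{\com_X}$,
\begin{equation*}
Rf_*R\Ho[\com_Y]{f^{-1}H}{\drt[Y]{\Dfinv f\mc M}}\simeq R\Ho[\com_X]{Rf_*f^{-1}H}{\drt[X]{\mc M}}.
\end{equation*}

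For part (i), take $H=G\otimes\com_{X\setminus Z}$. The projection formula, $Rf_!=Rf_*$, and the fact that $f|_{Y\setminus f^{-1}Z}$ is an isomorphism onto $X\setminus Z$ give $Rf_*f^{-1}H\simeq G\otimes\com_{X\setminus Z}$. It remains to identify $R\Ho[\com_X]{G\otimes\com_{X\setminus Z}}{\drt[X]{\mc M}}$ with $R\Ho[\com_X]{G}{\drt[X]{\mc M}}$. Here the assumption on $\mc M$ enters: by Theorem \ref{thm:RGamma}(ii) it rewrites as $\mc M\simeq\mc M\DDotimes\mc O_X[*Z]$, and since $\mc O_X[*Z]$ is regular holonomic with $R\Ho[\D_X]{\mc O_X[*Z]}{\mc O_X}\simeq\com_{X\setminus Z}$, Lemma \ref{lem:regular_twist} applied with $\mc R=\mc O_X[*Z]$ yields exactly
\begin{equation*}
R\Ho[\com_X]{G}{\drt[X]{\mc M}}\simeq R\Ho[\com_X]{G\otimes\com_{X\setminus Z}}{\drt[X]{\mc M}}.
\end{equation*}
This proves \eqref{eq:inverse_image_modification}; the \reacy\ statement follows because $f^{-1}$ preserves $\rea$-constructibility and, $f$ being proper, so does $Rf_*$.

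For part (ii), the restriction $f|_{Y\setminus f^{-1}Z}$ is a finite \'etale Galois covering with group $(\integer/l\integer)^k$, so $Rf_*\com_{Y\setminus f^{-1}Z}$ is the extension by zero of a rank-$l^k$ local system on $X\setminus Z$. Decomposing it into isotypic components under the Galois action isolates the trivial summand, producing $Rf_*\com_{Y\setminus f^{-1}Z}\simeq \com_{X\setminus Z}\oplus L$ for some local system $L$ on $X\setminus Z$. Running the same adjunction argument with $H=\com_{U\setminus Z}$ and combining with the Step~4 identification (applied to $\com_U$ and, by the same mechanism, to $L$) yields \eqref{eq:inverse_image_ramification}. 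The main technical obstacle in both parts is Step~4: singling out $\mc O_X[*Z]$ as the regular holonomic module whose twist realises the sought localization, and verifying $R\Ho[\D_X]{\mc O_X[*Z]}{\mc O_X}\simeq\com_{X\setminus Z}$; once Lemma \ref{lem:regular_twist} applies, the rest is formal.
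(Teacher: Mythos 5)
Your argument is correct and follows essentially the same route as the paper: relate $\drt[Y]{\Dfinv f\mc M}$ to $f^!\drt[X]{\mc M}$ via Theorem \ref{thm:indsheaves}(ii), apply the proper $(Rf_!,f^!)$-adjunction, and absorb the restriction to $X\setminus Z$ through Lemma \ref{lem:regular_twist} with $\mc R=\mathcal{O}_X[*Z]$, which is literally the paper's $R\Gamma_{[X\setminus Z]}\mathcal{O}_X$. The only presentational differences are that you isolate the adjunction identity once for a general $H$ before specializing, and that you supply the Galois-theoretic reason for splitting off the trivial summand in $Rf_!\com_{Y\setminus f^{-1}Z}\simeq\com_{X\setminus Z}\oplus L$, a fact the paper merely asserts.
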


\begin{proof}
\  \emph{(i)} Let $f:Y\to X$ be a modification. The following sequence
  of isomorphisms proves the statement
\begin{eqnarray*}
    R\Ho[\com_X]{G}{\omt X\ou[L]{\rho_!\D_X}\otimes\rho_! R\Gamma_{[X\setminus Z]}\mc M} & \simeq & \qquad\qquad\qquad\qquad \hspace{30mm}\phantom{a}
\end{eqnarray*}
\vspace{-10mm}
\nopagebreak
  \begin{eqnarray*} 
    \phantom{a}\qquad\qquad\qquad\qquad  &\simeq &    R\Ho[\com_X]{G}{\omt X\ou[L]{\rho_!\D_X}\otimes\rho_!(R\Gamma_{[X\setminus Z]}\mathcal{O}_X\ou[\mbb D]{}\otimes\mc M) }  \\\notag
    &  \simeq  &  R\Ho[\com_X]{G\otimes \com_{X\setminus Z}}{\omt X\ou[L]{\rho_!\D_X}\otimes\rho_!\mc M}  \\\notag
    &  \simeq  &  R\Ho[\com_X]{Rf_!\,f^{-1}(G\otimes\com_{X\setminus Z})}{\omt X\ou[L]{\rho_!\D_X}\otimes\rho_!\mc M} \\\notag
    &  \simeq  & Rf_* R\Ho[\com_{Y}]{f^{-1}(G\otimes\com_{X\setminus Z})}{f^!(\omt X\ou[L]{\rho_!\D_X}\otimes\rho_!\mc M)} \\\notag
    &  \simeq  & Rf_* R\Ho[\com_{Y}]{f^{-1}(G\otimes\com_{X\setminus Z})}{\omt Y\ou[L]{\rho_!\D_Y}\otimes\rho_!(\Dfinv f\mc M)} \ . \\\notag
  \end{eqnarray*}

  We have used Lemma \ref{lem:regular_twist} with $\mc
  R:=R\Gamma_{[X\setminus Z]}\mathcal{O}_X$ in the second
  isomorphism, the fact that $f|_{Y\setminus f^{-1}(Z)}$ is an isomorphism in the third
  isomorphism and Theorem \ref{thm:indsheaves}\emph{(ii)} in the last
  isomorphism.

\emph{(ii)}  
There exists a local system $L$ on
$U\setminus Z$ such that $Rf_!f^{-1}\com_{U\setminus Z}\simeq\com_{U\setminus
  Z}\oplus L$. We have

  \begin{eqnarray*} 
Rf_*R\Ho[\com_{Y}]{f^{-1}(\com_{U\setminus Z})}{\omt Y\ou[L]{\rho_!\D_Y}\otimes\rho_!(\Dfinv f\mc M)}    &\simeq &\qquad\qquad\qquad\qquad\qquad\qquad\phantom{a}
\end{eqnarray*}
\vspace{-10mm}
\begin{eqnarray*}
\phantom{a}\qquad\qquad\qquad\qquad\qquad  &  \simeq  &   
Rf_*R\Ho[\com_{Y}]{f^{-1}(\com_{U\setminus Z})}{f^!(\omt X\ou[L]{\rho_!\D_X}\otimes\rho_!\mc M)} 
\\
&  \simeq  &  
R\Ho[\com_X]{Rf_!\,f^{-1}(\com_{U\setminus Z})}{\omt X\ou[L]{\rho_!\D_X}\otimes\rho_!\mc M} 
\\
&  \simeq  &  
R\Ho[\com_X]{\com_{U\setminus Z}\oplus L}{\omt X\ou[L]{\rho_!\D_X}\otimes\rho_!\mc M}  
\\
&\simeq &   
R\Ho[\com_X]{\com_U\oplus L}{\omt X\ou[L]{\rho_!\D_X}\otimes\rho_!(R\Gamma_{[X\setminus Z]}\mathcal{O}_X \ou[\mbb D]{}\otimes \mc M )}  
\\
    &  \simeq  &  
R\Ho[\com_X]{\com_U\oplus L}{\omt X\ou[L]{\rho_!\D_X}\otimes\rho_! R\Gamma_{[X\setminus Z]}\mc M}   
\\
    &  \simeq  &  
R\Ho[\com_X]{\com_U\oplus L}{\omt X\ou[L]{\rho_!\D_X}\otimes\rho_! \mc M}  \ , 
  \end{eqnarray*}
  where we used Theorem \ref{thm:indsheaves} \emph{(ii)} in the first
  isomorphism, Lemma \ref{lem:regular_twist} with $\mc R:=\RGa{X\setminus
    Z}{\mathcal{O}_X}$ in the fourth isomorphism and Theorem
  \ref{thm:RGamma}\emph{(ii)} in the fifth isomorphism.

\end{proof}

In Subsection \ref{subsec:meromorphic_case} we will need also some
results on $\D_M$-modules for $M$ a real analytic manifold,
i.e. modules over the sheaf of rings of linear differential operators
of finite order with real analytic coefficients on $M$. We refer to
\cite[Section VII.1]{bjork}. Let us conclude with some notation about
$\D_M$-modules and their De Rham complexes. We denote with
$\Omega^\omega$ the sheaf on $M$ of differential forms of maximal
degree with real analytic coefficients. For $\mc M\in\Mod(\D_M)$, we
set
 $$ \drt[M]{\mc M}:=\rho_!\mc M\overset L{\underset{\rho_!\D_M}{\otimes}}(\Omega^\omega\underset{\mc C^\omega}\otimes\dbt_{M_sa})  \ . $$
 Even for such an object there are formulae similar to
 \eqref{eq:solutions_ot_thom}, \eqref{eq:inverse_image_modification},
 \eqref{eq:inverse_image_ramification} and those in the statement of
 Theorem \ref{thm:indsheaves} (see \cite{prelli_laplace,prelli_msmf}).

\subsection{Asymptotic decomposition of meromorphic
  connections}

In this subsection we are going to recall some fundamental results on
the asymptotic decompositions of flat meromorphic connections. The
first results on this subject were obtained by H. Majima
(\cite{majima_lnm}) and C. Sabbah (\cite{sabbah_aif}). In particular,
Sabbah proved that any \emph{good} formal decomposition of a flat
meromorphic connection on a complex surface admits an asymptotic lift
on small multisectors. Let us recall that Sabbah conjectured that any
flat meromorphic connection admits a \emph{good} formal decomposition
after a finite number of complex pointwise blow-up and ramification
maps (\cite{sabbah_ast}). Such a conjecture was proved in the
algebraic case by T. Mochizuki (\cite{mochizuki1,mochizuki2}) and in
the analytic one by K. Kedlaya (\cite{kedlaya1,kedlaya2}).  As in this
paper we are not concerned with the formal decomposition of flat
meromorphic connections and since the goodness property is not needed
within the scope of our results, we are not going to give details on
them. In this way, we will avoid to go into unessential technicalities
for the rest of the paper. For this subsection we refer to
\cite{sabbah_ast,mochizuki2,hien_periods_manifolds}.

Let $X$ be an analytic manifold of dimension $n$ and $Z$ a divisor of
$X$. Let us start by recalling some results about integrable
connections on $X$ with meromorphic poles on $Z$.  As in the rest of
the paper we will just need the case where $Z$ is a normal crossing
hypersurface, from now on, we will suppose such hypothesis.

Let $\Omega^j_X$ be the sheaf of $j$-forms on $X$. Let $\mc M$ be a
finitely generated $\mathcal{O}_X[*Z]$-module endowed with a $\com_X$-linear morphism
$\nabla:\mc M\to\Omega^1_X\otimes_{\mathcal{O}_X}\mc M$ satisfying the Leibniz
rule, that is to say, for any $h\in\mathcal{O}_X[*Z]$, $m\in\mc M$,
$\nabla(hm)=dh\otimes m+h\nabla m$. The morphism $\nabla$ induces
$\com_X$-linear morphisms $\nabla^{(j)}:\Omega^j_X\otimes_{\mathcal{O}_X}\mc
M\to \Omega^{j+1}_X\otimes_{\mathcal{O}_X}\mc M$.

\begin{df}\label{df:meromorphic_connection}
  A \emph{flat meromorphic connection on $X$ with poles along $Z$ } is
  a locally free $\mathcal{O}_X[*Z]$-module of finite type $\mc M$
  endowed with a $\com_X$-linear morphism $\nabla$ as above such that
  $\nabla^{(1)}\circ\nabla=0$.
\end{df}

In general, in the literature, a meromorphic connection is just a
coherent $\mc O[*Z]$-module. As stated in Proposition 1.2.1 of
\cite{sabbah_ast}, locally, the condition of being locally free is not
restrictive. Since in this paper we deal with the local study of
meromorphic connections and holonomic $\D$-modules, we adopted
Definition \ref{df:meromorphic_connection}. For sake of shortness, in
the rest of the paper, we will drop the adjective ``\emph{flat}''. If
there is no risk of confusion, given a meromorphic connection $(\mc
M,\nabla)$, we will simply denote it by $\mc M$.

Let $\mc M_1, \mc M_2$ be two locally free $\mathcal{O}_X[*Z]$-modules, a
morphism $\phi:\M_1\to\M_2$ induces a morphism
$\phi':\Omega^1_X\otimes_{\mathcal{O}_X}\mc
M_1\to\Omega^1_X\otimes_{\mathcal{O}_X}\mc M_2$. A \emph{morphism of
  meromorphic connections} $(\mc M_1,\nabla_1)\to(\mc M_2,\nabla_2)$
is given by a morphism $\phi:\M_1\to\M_2$ of locally free
$\mathcal{O}_X[*Z]$-modules such that
$\phi'\circ\nabla_1=\nabla_2\circ\phi$. We denote by $\mfr M(X,Z)$ the
\emph{category of meromorphic connections with poles along} $Z$.

Let us recall some facts on meromorphic connections that will be
useful in this paper, we refer to \cite{bjork} or Sections I.1.2 and
I.1.3 of \cite{sabbah_ast}.

It is well known that the category $\mfr M(X,Z)$ is equivalent to the
image in $\dmod[h]X$ of the functor $\cdot\overset{D}\otimes\mathcal{O}[*Z]$ on
the full subcategory of $\dmod[h]X$ consisting of objects with
singular locus contained in $Z$. In particular, if $\mc
M\in\dmod[h]{X}$ is such that $S(\mc M)$ is contained in $Z$ and $\mc
M\simeq R\Gamma_{[X\setminus Z]}\mc M\simeq\mc
M\Dotimes\mathcal{O}[*Z]$, then $\mc M$ is a locally free
$\mathcal{O}_X[*Z]$-module and the morphism $\nabla:\mc
M\lra\Omega^1_X\ou{\mathcal{O}_X}\otimes\mc M$, defined in a local coordinate
system $z:U\to \com^{n}$ by $\nabla m:=\ou[n]{j=1}\sum
dz^j\otimes\partial_{z^j}m$, gives rise to a meromorphic connection. A
meromorphic connection is said \emph{regular} if it is regular as a
$\D_X$-module. Furthermore the tensor product in $\mfr M(X,Z)$ is well
defined and it coincides with the tensor product of
$\D_X$-modules. With an abuse of language, given a holonomic
$\D_X$-module $\mc M$ with singular locus contained in $Z$, we will
call $\mc M$ a meromorphic connection if $\mc M\simeq
R\Gamma_{[X\setminus Z]}\mc M\simeq \mc M\Dotimes\mathcal{O}_X[*Z]$.

Let us recall that, if $(\mc M,\nabla)\in\mfr M(X,Z)$ and $\mc M$ is
an $\mathcal{O}_X[*Z]$-module of rank $r$, then, in a given basis of local
sections of $\mc M$, we can write $\nabla$ as $d-A$ where $A$ is a
$r\times r$ matrix with entries in
$\Omega^1_X\otimes_{\mathcal{O}_X}\mathcal{O}_X[*Z]$. Now, let $X'$ be a complex
manifold and $f:X'\to X$ a holomorphic map. Let us suppose that
$Z':=f^{-1}(Z)$ is a normal crossings hypersurface of $X'$ and that
$f$ is smooth on $X'\setminus Z'$. Considering $\mc M$ as a
$\D_X$-module, it satisfies $R\Gamma_{[X\setminus Z]}\mc M\simeq\mc
M$. Moreover, since, for $j\geq1$, $\mathrm{supp}\,H^j\Dfinv f\mc
M\subset Z'$, one checks that $\Dfinv f\mc M\simeq\dfinv f\mc M\simeq
R\Gamma_{[X'\setminus Z']}\dfinv f{\mc M}$. Hence $\dfinv f\mc M$ can
be considered as an object of $\mfr M(X',Z')$. As an $\mathcal
O_{X'}[*Z']$-module, it is isomorphic to $f^{-1}\mc M$ and the matrix
of the connection in a local base is $f^*A$.

Let us now introduce the elementary asymptotic decompositions.

Let us denote by $\widetilde X$ the real oriented blow-up of the
irreducible components of $Z$ and by $\pi:\widetilde X\to X$ the
composition of all these. Let us suppose that $Z$ is locally defined
by $x_1\cdot\ldots\cdot x_k=0$. Then, locally $\widetilde X\simeq
(S^1\times\rea_{\geq0})^k\times\com^{n-k}$. In what follows, $S^1$
will be identified with the unit circle in $\com$, so
$S^1=\{e^{i\theta}\in\com;\theta\in\rea\}$.

A \emph{multisector} is an open subset $S\subset\widetilde X$ such
that there exist $a_j,b_j\in\rea$, $a_j<b_j$, $r_j\in\reagz$
($j=1,\ldots,k$), $W$ an open neighborhood of $0\in\com^{n-k}$ such
that
\begin{equation}
  \label{eq_multisector}
  S=\Big(\ou[k]{j=1}\prod I_j\times[0,r_j[\Big)\times W\subset (S^1\times\rea_{\geq0})^k\times\com^{n-k}\ ,
\end{equation}
where $I_j=\{e^{i\theta}\in\com;\ \theta\in]a_j,b_j[\}\subset
S^1$. Given $(r,\tau)\in\reagz^2$, we say that a multisector $S$ is
\emph{small with respect to} $(r,\tau)$ if $S$ can be written as in
\eqref{eq_multisector} and, for any $j=1,\ldots,k$, $b_j-a_j<\tau$,
$r_j<r$ and for any $x\in W$, $|x|<r$.

Let $\overline x_1, \ldots, \overline x_n$ denote the antiholomorphic
coordinates on $X$. Then, for $j=k+1,\ldots,n$ (resp. $j=1,\ldots,k$)
$\partial_{\overline x_j}$ (resp. $\overline x_j\partial_{\overline
  x_j}$) acts on $\mc C^\infty_{\widetilde X}$ (see \cite{sabbah_aif}
2.12 or \cite{sabbah_ast} 1.1.4). The sheaf of algebras on $\widetilde
X$ of \emph{holomorphic functions with asymptotic development on $Z$},
denoted $\mc A_{\widetilde X}$, is defined as
$$ \mc A_{\widetilde X}:=\ou[k]{j=1}\bigcap\mrm{ker}\Big(\overline x_j\partial _{\overline x_j}:\mc C^\infty_{\widetilde
  X}\to\mc C^\infty_{\widetilde X}\Big)\cap\ou[n]{j=k+1}\bigcap
\mrm{ker}\Big(\partial _{\overline x_j}:\mc C^\infty_{\widetilde
  X}\to\mc C^\infty_{\widetilde X}\Big) \ .$$ The sections of $\mc
A_{\widetilde X}$ are holomorphic functions on $\widetilde X$ which
admit an asymptotic development as explained in Proposition B.2.1 of
\cite{sabbah_ast}. Moreover, $\mc A_{\widetilde X}$ is a
$\pi^{-1}\mathcal{O}_X$-module.

\begin{df}\label{df:elementary_model}
  \begin{enumerate}
  \item Let $\phi$ be a local section of
    $\mathcal{O}_X[*Z]/\mathcal{O}_X$, we denote by $\mc L^\phi$ the
    meromorphic connection of rank $1$ whose matrix in a basis is
    $d\phi$.
  \item An \emph{elementary local model} $\mc M$ is a meromorphic
    connection isomorphic to a direct sum
$$\ou{\alpha\in A}\oplus\mc L^{\phi_\alpha}\otimes\mc R_\alpha  \ , $$
where $A$ is a finite set, $(\phi_\alpha)_{\alpha\in A}$ is a family
of local sections of $\mathcal{O}_X[*Z]/\mathcal{O}_X$ and $(\mc
R_\alpha)_{\alpha\in A}$ is a family of regular meromorphic
connections.
\item We say that $(\mc M,\nabla)\in\mfr M(X,Z)$ admits an elementary
  $\mc A$-decomposition if there exist an elementary local model $(\mc
  M^{el},\nabla^{el})$ and $(r,\tau)\in\reagz^2$ such that for any
  multisector $S\subset\widetilde X$ small with respect to $(r,\tau)$,
  there exists $Y_S\in\mrm Gl(\mrm{rk}\,\mc M,\mc A_{\widetilde
    X}(S))$ such that the following diagram commutes
\begin{equation}
  \label{eq:asymptotic_connection}
\xymatrix{
      \pi^{-1}\mc M\ou{\pi^{-1}\mathcal{O}_X}\otimes\mc A_{\widetilde X}(S)
        \ar[r]^{\nabla\phantom{abcde}}\ar[d]_{\sim}^{Y_S\cdot}
     & \pi^{-1}(\mc M\ou{\mathcal{O}_X}{\otimes}\Omega^1_X)\ou{\pi^{-1}\mathcal{O}_X}{\otimes}\mc A_{\widetilde X}(S)
\ar[d]_{\sim}^{Y_S\cdot}
\\
 \pi^{-1}\mc M^{el}\ou{\pi^{-1}\mathcal{O}_X}\otimes\mc A_{\widetilde X}(S)
        \ar[r]^{\nabla^{el}\phantom{abcd}}
     & \pi^{-1}(\mc M^{el}\ou{\mathcal{O}_X}{\otimes}\Omega^1_X)\ou{\pi^{-1}\mathcal{O}_X}{\otimes}\mc A_{\widetilde X}(S)
\ .\\
}
\end{equation}
\end{enumerate}
\end{df}

Remark that the isomorphism in \eqref{eq:asymptotic_connection}
depends on $S$ and, in general, it does not give a global isomorphism.

The following Theorem is obtained by combining fundamental results of
C. Sabbah (\cite{sabbah_aif}), T. Mochizuki
(\cite{mochizuki1,mochizuki2}) and K. Kedlaya (\cite{kedlaya1,
  kedlaya2}).

\begin{thm}\label{thm:asymptotic_lift}
  Let $(\mc M, \nabla)\in\mfr M(X,Z)$. For any $x_0\in Z$ there exist
  a neighborhood $W$ of $x_0$ and a modification with respect to
  $W\cap Z$ above $x_0$, $\sigma:Y\to X$ such that $\sigma^{-1}(Z)$ is
  a normal crossing divisor and there exists a ramification map
  $\eta:X'\to Y$ fixing $\sigma^{-1}(Z)$ such that
  $\dfinv{(\eta\circ\sigma)}\mc M|_W$ admits an elementary $\mc
  A$-decomposition.
\end{thm}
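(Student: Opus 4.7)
The plan is to combine two deep results from the literature: the Kedlaya--Mochizuki theorem on good formal decompositions of flat meromorphic connections, and the Hukuhara--Turrittin--Sibuya-type asymptotic existence theorem in the higher-dimensional setting developed by Sabbah, Mochizuki and Hien. No new argument is needed beyond checking that the output of the first feeds into the hypothesis of the second.

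First, I would apply the Kedlaya--Mochizuki theorem (\cite{mochizuki1,mochizuki2,kedlaya1,kedlaya2}) to $(\mc M,\nabla)$ in a small neighborhood of the fixed point $x_0\in Z$. This furnishes a neighborhood $W$ of $x_0$, a proper modification $\sigma:Y\to X$ above $W\cap Z$ such that $\sigma^{-1}(Z)$ is a normal crossing divisor, and a ramification map $\eta:X'\to Y$ fixing $\sigma^{-1}(Z)$, with the property that the formal completion of the pullback $\dfinv{(\eta\circ\sigma)}\mc M|_W$ along the normal crossing divisor is formally isomorphic to a \emph{good} direct sum of the form $\bigoplus_{\alpha\in A}\mc L^{\phi_\alpha}\otimes\mc R_\alpha$, where the $\phi_\alpha$ are (purely meromorphic) local sections of $\mathcal{O}_{X'}[*(\eta\circ\sigma)^{-1}(Z)]/\mathcal{O}_{X'}$ satisfying the goodness hypothesis and the $\mc R_\alpha$ are regular meromorphic connections. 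This is the elementary local model $(\mc M^{el},\nabla^{el})$ of Definition \ref{df:elementary_model}.

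Second, I would invoke the higher-dimensional asymptotic existence theorem: once a good formal decomposition along a normal crossing divisor is available, the formal isomorphism can be lifted to a genuine holomorphic gauge transformation on each sufficiently narrow multisector. Concretely, following Sabbah \cite{sabbah_ast} (for the surface case), Chapter 20 of \cite{mochizuki2} and Hien \cite{hien_periods_manifolds} (for arbitrary dimension), there exist $(r,\tau)\in\reagz^2$ such that for every multisector $S$ of the real oriented blow-up of $X'$ that is small with respect to $(r,\tau)$, one obtains a matrix $Y_S\in\mrm{Gl}(\mrm{rk}\,\mc M,\mc A_{\widetilde{X'}}(S))$ whose asymptotic expansion coincides with the formal isomorphism provided by the first step and which intertwines $\nabla$ with $\nabla^{el}$. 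This is exactly the commutative square \eqref{eq:asymptotic_connection}, so $\dfinv{(\eta\circ\sigma)}\mc M|_W$ admits the required elementary $\mc A$-decomposition.

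The actual difficulty of the statement is entirely concentrated in the first step: the Kedlaya--Mochizuki theorem is a very delicate iterative construction that controls the accumulation of irregular values and reduces an arbitrary flat meromorphic connection to one with a good formal structure after finitely many blow-ups and ramifications. The asymptotic lifting step, although technical, is by now classical once goodness is in hand, relying on a Borel--Ritt type summation argument along sufficiently narrow sectors. Consequently the present theorem is obtained by citation, and the only thing to verify is that the notion of goodness used by Kedlaya--Mochizuki matches the hypothesis of the asymptotic existence theorem, which is the case.
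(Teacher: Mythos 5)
Your proposal is correct and takes essentially the same route as the paper: the paper does not give a detailed proof but states the theorem as ``obtained by combining fundamental results of C.~Sabbah, T.~Mochizuki and K.~Kedlaya,'' which is exactly the two-step combination (Kedlaya--Mochizuki good formal decomposition, then sectorial asymptotic lifting \`a la Sabbah/Mochizuki/Hien) that you spell out.
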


\section{$\rea$-sa-constructibility of the tempered De Rham complex of
  holonomic $\D$-modules}
\sectionmark{Constructibility of tempered De Rham complex}

In this section we are going to prove the following

\begin{thm}\label{thm:main}
  Let $X$ be a complex analytic manifold, $\mc M\in\dbdx[h]$. Then
  $\drt [X]{\mc M}\in\bdc{\com_\xsa}$ is \reac.
\end{thm}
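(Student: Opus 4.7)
The plan is to argue by induction on $d := \dim X$, the base case $d \leq 1$ being Theorem~\ref{thm:r-c_dim1}. For the inductive step, apply the distinguished triangle of Theorem~\ref{thm:RGamma}~(i) with $Z := S(\mc M)$:
\begin{equation*}
R\Gamma_{[Z]}\mc M \lra \mc M \lra R\Gamma_{[X\setminus Z]}\mc M \overset{+1}\lra .
\end{equation*}
Since $\rea$-sa-constructibility is stable under cone in $\bdc{\com_\xsa}$, it suffices to treat separately the two cases: \emph{(a)} $\mc M \simeq R\Gamma_{[X\setminus Z]}\mc M$, so that $\mc M$ is essentially a meromorphic connection with poles along $Z$; and \emph{(b)} $\mc M$ is supported on the proper closed analytic subset $Z$.

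For case \emph{(b)}, use Hironaka's resolution of singularities of $Z$ together with Kashiwara's equivalence to present $\mc M$ (up to a finite sequence of cones) as a proper direct image of holonomic modules living on smooth manifolds of dimension $< d$, to which the inductive hypothesis applies. The compatibility of $\omt{}$ with proper direct images (the dual of Theorem~\ref{thm:indsheaves}~(ii)) together with Proposition~\ref{prop:temp_supp_inv_image} then transports \reacy\ from the lower dimensional situation up to $\drt[X]{\mc M}$.

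For case \emph{(a)}, \reacy\ is local on $\xsa$, so we may localize near any $x_0 \in Z$. Apply Theorem~\ref{thm:asymptotic_lift} to find a modification $\sigma \colon Y \to X$ relative to a neighborhood of $x_0$ and a ramification $\eta \colon X' \to Y$ fixing $\sigma^{-1}(Z)$ such that $\mc M' := \dfinv{(\eta\circ\sigma)}\mc M$ admits an elementary $\mc A$-decomposition. Proposition~\ref{prop:constructibility_inv_im}~(i) applied to $\sigma$ and~(ii) applied to $\eta$ reduce \reacy\ of $\drt[X]{\mc M}$ to that of $\drt[X']{\mc M'}$. As $\mc M'$ is a finite direct sum $\bigoplus_\alpha \mc L^{\phi_\alpha} \otimes \mc R_\alpha$ with each $\mc R_\alpha$ regular holonomic, Lemma~\ref{lem:regular_twist} absorbs the regular factors into the $\rea$-constructible test sheaf, and we are reduced to showing that $\drt[X']{\mc L^\phi}$ is \reac\ for an arbitrary local section $\phi$ of $\mathcal{O}_{X'}[*Z']/\mathcal{O}_{X'}$.

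The crux of the whole argument lies in this rank-one exponential case. Using the explicit presentation~\eqref{eq:de_rham_explicit}, computing $R\Ho[\com_X]{\com_U}{\drt[X']{\mc L^\phi}}$ amounts to identifying the sections of $\THom(\com_U, \mathcal{O}_{X'})$ that remain tempered after twisting by $e^{\phi}$. The Rectilinearization Theorem~\ref{thm:hironaka_rectilinearization} normalizes $\phi$ as a meromorphic monomial times a unit, after which the sets $\{x : \mrm{Re}\,\phi(x) < -C\,\mathrm{dist}(x,Z)^{-N}\}$ become subanalytic. The main obstacle is to glue the multisectorial asymptotic lifts coming from~\eqref{eq:asymptotic_connection} into a single subanalytic stratification on which the tempered-twisted sections form an $\rea$-sa-constructible sheaf. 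This requires combining the growth estimates on $\dbtxsa$ with careful subanalytic bookkeeping of the Stokes phenomenon in arbitrary dimension; testing the resulting complex against a general $G \in \bdc[\rea-c]{\com_X}$ then yields the required $\rea$-constructible cohomology sheaves.
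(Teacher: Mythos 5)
Your outline reproduces the paper's skeleton faithfully: the same distinguished triangle split into the meromorphic case and the case of modules supported on $S(\mc M)$, the same use of Theorem~\ref{thm:asymptotic_lift}, Proposition~\ref{prop:constructibility_inv_im} and Lemma~\ref{lem:regular_twist} to strip away resolutions, ramifications and regular twists, and the same identification of the elementary exponential $\mc L^\phi$ as the decisive case. But at the point you yourself call the crux, the proposal stops being a proof. You write that one must combine ``growth estimates on $\dbtxsa$ with careful subanalytic bookkeeping of the Stokes phenomenon in arbitrary dimension'' --- this is precisely the obstruction that kept the problem open, and the direct multidimensional stratification approach is what gave only the weaker result in \cite{morando_preconstructibility}. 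The actual proof of Lemma~\ref{lem:constructibility_elem_model} sidesteps this entirely: after Rectilinearization puts $\psi = \phi\circ g$ in monomial form $a\,y_1^{-k_1}\cdots y_{2n}^{-k_{2n}}$, the paper introduces the real analytic map $f:\rea^{2n}\to\rea$, $f(y)=a^{-1}\prod y_j^{k_j}$, observes that $\mc L^\psi\simeq\Dfinv f\mc L^{1/t}$, and uses the $Rf_!\dashv f^!$ adjunction together with Theorem~\ref{thm:indsheaves}(ii) to collapse the whole computation onto $\rea^1$, where $\rea$-sa-constructibility reduces to the elementary solvability of $t^2\frac d{dt}-1$ on tempered distributions. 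No multidimensional Stokes bookkeeping is performed; the dimensional reduction via the monomial map \emph{is} the new idea, and it is missing from your argument.

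There is also a more minor divergence in case~(b). You propose to present $\RGa Z{\mc M}$, up to cones, as a proper direct image of holonomic modules on resolutions of $Z$ via Kashiwara's equivalence; but Kashiwara's equivalence applies to closed \emph{embeddings}, and the resolution $\widetilde Z\to Z\hookrightarrow X$ is not one, so it is not clear that such a presentation exists without a more careful structural analysis near $Z_s$. The paper avoids this by never decomposing $\mc M$ at all: it runs an induction on $d=\dim S(\mc M)$ (not on $\dim X$), reduces the test object $G$ to $\com_{U\cap Z_r}$ and $\com_{U\cap Z_s}$, and in the $Z_r$ case \emph{pulls back} $\RGa Z{\mc M}$ to the strict transform $Z'$ and applies the $R\pi_!\dashv\pi^!$ adjunction, invoking the already-established Proposition~\ref{prop:constructibility_connections} plus the inductive hypothesis on $\dim S$. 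That is a different, and more robust, maneuver than a pushforward presentation of $\mc M$; you would need to justify your version or switch to the pullback formulation.
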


\begin{proof}

By using the distinguished triangle
$$R\Gamma_{[S(\mc M)]}\mc M\lra\mc M \lra R\Gamma_{[X\setminus S(\mc M)]}\mc M\overset{+1}\lra  \ ,$$

it turns out that it is sufficient to prove the
$\rea$-sa-constructibility of $\drt[X](R\Gamma_{[S(\mc M)]}\mc M)$ and
$\drt[X](R\Gamma_{[X\setminus S(\mc M)]}\mc M)$. We will deal these two
cases separately in the next subsections.


\end{proof}

\subsection{The case of $R\Gamma_{[X\setminus S(\mc M)]}\mc
  M$}\label{subsec:meromorphic_case}

In the present Subsection, first we prove the conjecture for
elementary models, then for connections admitting an elementary $\mc
A$-decomposition and in the end for meromorphic connections.

Recall that, for $Z$ a normal crossings hypersurface and
$\phi\in\mathcal{O}_X[*Z]/\mathcal{O}_X$, we denote by $\mc L^\phi$ the meromorphic
connection of rank $1$ whose matrix in a basis is $d\phi$. As a
$\D_X$-module, $\mc L^\phi$ is isomorphic to $\RGa{X\setminus
  Z}{(\D_X\exp(\phi))}$. Remark that, with a harmless abuse, we use
the same notation for the real analytic case.

\begin{lem}\label{lem:constructibility_elem_model}
  Consider $X:=\com^n$ with standard coordinates
  $(x_1,\ldots,x_n)$. Set $Z:=\{(x_1,\ldots,x_n)\in X;
  \ x_1\cdot\ldots\cdot x_n=0\}$. Given $ \phi\in\frac {\mathcal{O}(*Z)}{\mathcal{O}}$ and $\mc R\in\bdc[rh]{\D_X}$,
  we have that $\drt[X]{(\mc L^{\phi}\otimes \mc R)}$ is \reac.
\end{lem}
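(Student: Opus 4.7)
The plan is first to absorb the regular holonomic factor $\mc R$ via Lemma \ref{lem:regular_twist}. Setting $L:=\RH[\D_X]{\mc R}{\mathcal{O}_X}$, which lies in $\bdc[\rea-c]{\com_X}$ by Kashiwara's classical constructibility theorem for regular holonomic $\D$-modules, the lemma gives, for every $G\in\bdc[\rea-c]{\com_X}$,
\[
\RH[\com_X]{G}{\drt[X]{(\mc L^\phi\DDotimes\mc R)}}\simeq \RH[\com_X]{G\otimes L}{\drt[X]{\mc L^\phi}}.
\]
Since $G\otimes L$ is again $\rea$-constructible, the problem reduces to proving the \reacy\ of $\drt[X]{\mc L^\phi}$ alone.

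Next I would bring $\phi$ into a monomial normal form. Writing $\phi=g/f$ with $f,g\in\mathcal{O}_X$ and $f$ cutting out $Z$, Hironaka's rectilinearization (Theorem \ref{thm:hironaka_rectilinearization}), applied to the real analytic function $f\cdot g$, produces locally a composition of blow-ups along smooth analytic centers $\sigma:Y\to X$ followed by a ramification $\eta:X'\to Y$ fixing $\sigma^{-1}(Z)$ (as in Definition \ref{df:modification_ramification}) such that $(\eta\circ\sigma)^*\phi$ takes the form $u(x)\cdot x^{-\alpha}$, with $\alpha\in\nat^k\setminus\{0\}$ and $u$ a nowhere-vanishing holomorphic function. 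Because $\mc L^\phi\simeq\RGa{X\setminus Z}{\mc L^\phi}$ and Proposition \ref{prop:temp_supp_inv_image} preserves this property under pullback, both parts of Proposition \ref{prop:constructibility_inv_im} apply: the \reacy\ of $\drt[X']{(\eta\circ\sigma)^{*}\mc L^\phi}$ yields the \reacy\ of $\drt[Y]{\sigma^*\mc L^\phi}$ via part (ii), which in turn yields the \reacy\ of $\drt[X]{\mc L^\phi}$ via part (i). One is thus reduced to the case where $\phi$ is a monomial (up to a unit factor).

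Finally, in this normal form I would compute $\RH[\com_X]{\com_U}{\drt[X]{\mc L^\phi}}$ for $U\in\opxsac$ via the explicit tempered Dolbeault--de Rham double complex \eqref{eq:de_rham_explicit}. Since $\mc L^\phi$ has rank $1$ and its horizontal sections are proportional to the exponential of $\phi$, the temperedness of these classes on $U$ is controlled by the sign and growth of the real analytic function $\mathrm{Re}(u(x)\cdot x^{-\alpha})$ on $X\setminus Z$; its level and sign hypersurfaces are subanalytic and, together with the components of $Z$, induce a subanalytic stratification of $X$ on each open stratum of which I would show that the cohomology sheaves of $\RH[\com_X]{\com_U}{\drt[X]{\mc L^\phi}}$ are locally constant with finite-dimensional stalks. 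The principal obstacle will be exactly this last step: producing uniform tempered Poincar\'e-lemma estimates along the higher-dimensional Stokes hypersurface $\{\mathrm{Re}(\phi)=0\}$ and controlling its interaction with the coordinate components of $Z$. This is the delicate point at which the one-dimensional argument of Theorem \ref{thm:r-c_dim1} and the techniques of \cite{morando_preconstructibility} must be significantly adapted to the higher-dimensional geometry.
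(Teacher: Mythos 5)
Your first reduction (absorbing $\mc R$ via Lemma~\ref{lem:regular_twist}) matches the paper exactly, but from there your plan diverges and contains a genuine gap.

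You read Hironaka's rectilinearization as producing a composition of \emph{complex} blow-ups $\sigma:Y\to X$ followed by a \emph{ramification map} $\eta:X'\to Y$ fixing $\sigma^{-1}(Z)$, and you then propose to apply Proposition~\ref{prop:constructibility_inv_im}~(i)--(ii). But Theorem~\ref{thm:hironaka_rectilinearization} does not produce morphisms in the complex analytic category at all: it gives a \emph{real analytic} map $\pi:\rea^{n}\to M$ onto a compact neighbourhood, which is neither a modification nor a ramification in the sense of Definition~\ref{df:modification_ramification}. The paper works in the underlying real analytic manifold $M=\rea^{2n}$ and carries the De~Rham complex across via the real-analytic version $\drt[M]{\cdot}$ introduced at the end of Subsection~\ref{subsection:tempered_de_rham}, using $g_K^!$ directly rather than Proposition~\ref{prop:constructibility_inv_im}. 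More importantly, you drop the decisive feature of the rectilinearization theorem: its simultaneous normalization of the test open set, namely that $\pi^{-1}(U)$ becomes a \emph{union of quadrants}. Your proposal controls only the form of $\phi$, not the shape of $U$; without the quadrant structure of $V:=g^{-1}(U)\cap K$, the subsequent stalk computation would not go through.

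The third step is where the proposal breaks down rather than just deviates. You correctly identify the difficulty --- ``producing uniform tempered Poincar\'e-lemma estimates along the higher-dimensional Stokes hypersurface $\{\mathrm{Re}\,\phi=0\}$'' --- and leave it unresolved; this is not a detail but the entire content of the lemma. The paper avoids it altogether. Once $\psi = a\,y_1^{-k_1}\cdots y_{2n}^{-k_{2n}}$, it introduces the real analytic map $f:\rea^{2n}\to\rea$, $f(y)=a^{-1}y_1^{k_1}\cdots y_{2n}^{k_{2n}}$, observes that $\mc L^\psi\simeq\Dfinv f\,\mc L^{1/t}$, and uses $f^!$-adjunction to rewrite the stalk as
\[
R\ho[\com_\rea]{\com_{f(V\cap B)}}{\drt[\rea]{\mc L^{1/t}}}\;\simeq\;R\ho[\D_\rea]{\mc L^{-1/t}}{\THom(\com_{f(V\cap B)},\Db_\rea)},
\]
which is decided by the explicit solvability of $t^2\tfrac{d}{dt}-1$ on tempered distributions on the real line. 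The quadrant structure of $V$ makes $f(V\cap B)$ tractable. This one-dimensional reduction is the key idea you are missing; with it, no higher-dimensional tempered Poincar\'e lemma is needed.
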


\begin{proof}
  First, let us remark that, by Lemma \ref{lem:regular_twist} it is
  sufficient to prove the statement with $\mc R\simeq\mathcal{O}_X$.  

  Given $U\in\Op^c(\com^n_{sa})$, by Theorem
  \ref{thm:hironaka_rectilinearization} we have that for any
  $x_0\in\rea^{2n}$ there exist real analytic map
  $g:\rea^{2n}\to\rea^{2n}$ and a compact set $K\subset\rea^{2n}$ such that
  \begin{enumerate}
  \item $g(K)$ is a neighborhood of $x_0$,
  \item $g^{-1}(U)$ is a union of quadrants,
  \item $g$ induces an open embedding of $g^{-1}(U)$ into $X$.
  \item $\psi(y):=\phi\circ g(y)=a\cdot y_1^{-k_1}\cdot\ldots\cdot
    y_{2n}^{-k_{2n}}$, $a\in\rea^\times$, $k_1,\ldots,k_{2n}\in\nat$.
  \end{enumerate}

Since $\overline U$ is compact, it is sufficient to prove that 
$$ R\Ho[\com_X]{\com_{U\cap g(K)}}{\drt[X]{\mc L^\phi}}\in\bdc[\rea-c]{\com_X}  \ .$$

Set $g_k:=g|_K$. We have the following sequence of isomorphisms
\begin{equation}
   \label{eq:functorial_rectilinearization}
  R\Ho[\com_X]{\com_{U\cap g(K)}}{\drt[X]{\mc L^\phi}}\simeq\qquad\qquad\qquad\qquad\qquad\qquad\qquad\qquad\phantom{a}
\end{equation} 
\vspace{-9mm}
\begin{eqnarray*}
\phantom{a}\qquad\qquad\qquad\qquad\qquad\qquad&\simeq &   R\Ho[\com_X]{Rg_{K!}g_K^{-1}\com_{U\cap g(K)}}{\drt[X]{\mc L^\phi}}\\\notag
&\simeq & Rg_{K*}  R\Ho[\com_{K}]{g_K^{-1}\com_{U\cap g(K)}}{g_K^!\drt[X]{\mc L^\phi}}\\\notag
&\simeq & Rg_{K*}  R\Ho[\com_{K}]{\com_{g^{-1}(U)\cap K}}{\drt[\rea^{2n}]{\Dfinv{g_K}\mc L^\phi}}\\\notag
&\simeq & Rg_{K*}  R\Ho[\com_{K}]{\com_{g^{-1}(U)\cap K}}{\drt[\rea^{2n}]{\mc L^\psi}} \ .
\end{eqnarray*}

Set $M:=\rea^{2n}$, $V:=g^{-1}(U)\cap K$ and
$Z':=\{(y_1,\ldots,y_{2n});\ y_1\cdot\ldots\cdot y_{2n}=0\}$. Clearly
one has
$$ R\Ho[\com_{M}]{\com_{V}}{\drt[M]{\mc L^\psi}}_{M\setminus Z'}\in\bdc[\rea-c]{\com_{M}}  \ .$$

Let us now consider, for $y_0\in Z'$
$$ R^j\Ho[\com_{M}]{\com_{V}}{\drt[M]{\mc L^\psi}}_{y_0} \ .$$
We are going to prove that it does not depend on $y_0$, this will
conclude the proof.

First, remark that 
\begin{eqnarray*}
  R\Ho[\com_{M}]{\com_{V}}{\drt[M]{\mc L^\psi}}&\simeq&R\Ho[\rho_!\D_M]{\com_{V}\otimes\com_{M\setminus Z'}\otimes\mbb D_M\mc L^\psi}{\dbt_M}\\
&\simeq&R\Ho[\rho_!\D_M]{\mc L^{-\psi}}{R\Ho{\com_{V}}{\dbt_M}}\\
&\simeq&R\Ho[\D_M]{\mc L^{-\psi}}{\THom{\com_{V}}{\Db_M}} \ .
\end{eqnarray*}

Now, 
\begin{eqnarray*}
R\Ho[\D_M]{\mc L^{-\psi}}{\THom(\com_{V},\Db_M)}_{y_0}& \simeq &\qquad\qquad\qquad\qquad \hspace{30mm}\phantom{a}
\end{eqnarray*}
\vspace{-10mm}
\nopagebreak
  \begin{eqnarray*} 
    \phantom{a}\qquad\qquad\qquad\qquad\qquad  &\simeq & 
\underset{y_0\in W}\varinjlim\ R\ho[\rho_!\D_W]{j_W^{-1}\mc L^{-\psi}}{j_W^{-1}\THom(\com_{V},\Db_M)}\\  
& \simeq &\underset{y_0\in W}\varinjlim\ R\ho[\rho_!\D_M]{\mc L^{-\psi}}{Rj_{W*}j_W^{-1}\THom(\com_{V},\Db_M)}\\  
& \simeq &\underset{y_0\in W}\varinjlim\ R\ho[\rho_!\D_M]{\mc L^{-\psi}}{\THom(\com_{V\cap W},\Db_M)}\\  
& \simeq &\underset{y_0\in W}\varinjlim\ R\ho[\com_M]{\com_{V\cap W}}{\drt[M]{\mc L^{\psi}}} \ , 
\end{eqnarray*}
where in the third isomorphism, we used the fact that, for
$W'\subset\subset W$, the restriction morphism $$ \THom(\com_{V\cap
  W},\Db_M)\lra\THom(\com_{V\cap W'},\Db_M) $$ factorizes through
$Rj_{W*}j_W^{-1}\THom(\com_{V},\Db_M)$, for $j_W:W\to M$ the
inclusion.

Consider the map $f:M\to\rea$,
$f(y_1,\ldots,y_{2n})=a^{-1}y_1^{k_1}\cdot\ldots\cdot y_{2n}^{k_{2n}}$ and set
$B:=B(y_0,\ep)$. We have the following series of isomorphisms

\begin{eqnarray*}
 R\ho[\com_M]{\com_{V\cap B}}{\drt[M]{\mc L^\psi}} & \simeq & R\ho[\com_M]{\com_{V\cap B}}{\drt[M]{\Dfinv f\mc L^{1/t}}}\\
& \simeq & R\ho[\com_M]{\com_{V\cap B}}{f^!\drt[\rea]{\mc L^{1/t}[1-2n]}}\\
& \simeq & R\ho[\com_\rea]{Rf_!\com_{V\cap B}[2n-1]}{\drt[\rea]{\mc L^{1/t}}}\\
& \simeq & R\ho[\com_\rea]{\com_{f(V\cap B)}}{\drt[\rea]{\mc L^{1/t}}} \\
& \simeq & R\ho[\com_\rea]{\mc L^{-1/t}}{\THom(\com_{f(V\cap B)},\Db_M)}  \ .
\end{eqnarray*}

Using the well known solvability of the homogeneous and non homogeneous
differential equations related to the operator $t^2\frac d{dt}-1$
acting on tempered distributions on the real line, we have that for
$\ep$ small enough the following isomorphisms hold
\begin{multline*}
R^j\ho[\D_{M}]{\mc L^{-\psi}}{\THom(\com_{V\cap B(y_0,\ep)},\Db_M)}\simeq\\
\simeq\begin{cases}
  0 & \text{ if } j>0\\
 0 &  \text{ if } j=0 \text{ and } \exp(-\psi)\notin \THom(\com_{V\cap B(y_0,\ep)},\Db_M)\\
 \com &  \text{ if } j=0 \text{ and } \exp(-\psi)\in \THom(\com_{V\cap B(y_0,\ep)},\Db_M) \ .
\end{cases}
\end{multline*}

\end{proof}

Let us now consider the case of meromorphic connections admitting an
elementary $\mc A$-decomposition.

\begin{lem}\label{lem:constructibility_A-decomp}
  Let $X$ be a complex analytic manifold and $\mc M\in\dmod[h]X$ be such that
  \begin{enumerate}
  \item $\SM$ is a normal crossing hypersurface,
  \item $\mc M\simeq R\Gamma_{[X\setminus \SM]}\mc M$,
  \item as a
    meromorphic connection, $\mc M$ admits an elementary $\mc A$-decomposition.  
  \end{enumerate} 
  Then $\drt[X]{\mc M}$ is \reac.
\end{lem}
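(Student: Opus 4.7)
The property of $\rea$-sa-constructibility is local on $\xsa$, so we work in a neighbourhood of an arbitrary point $x_0\in X$. Off the normal crossings divisor $Z:=S(\mc M)$, $\mc M$ is a flat integrable connection on a smooth open set, hence locally isomorphic to $\mathcal{O}_X^{\,r}$ as a $\D_X$-module, and $\drt[X]{\mc M}$ is locally constant, which is trivially constructible. So fix $x_0\in Z$ and choose coordinates with $Z=\{x_1\cdots x_k=0\}$. Let $\pi:\widetilde X\to X$ be the real oriented blow-up along $Z$. By hypothesis (iii) there exist $(r,\tau)\in\reagz^2$ such that on every multisector $S\subset\widetilde X$ small with respect to $(r,\tau)$ one has a matrix $Y_S\in\mathrm{Gl}(\mathrm{rk}\,\mc M,\mc A_{\widetilde X}(S))$ conjugating $\nabla$ to $\nabla^{el}$, where $\mc M^{el}=\bigoplus_\alpha\mc L^{\phi_\alpha}\otimes\mc R_\alpha$. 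Since $\pi^{-1}(x_0)$ is compact, I would cover it by finitely many such multisectors $S_1,\dots,S_N$; the sets $U_i:=\pi(S_i\setminus\pi^{-1}(Z))\subset X\setminus Z$ are open subanalytic sectors whose union is a punctured neighbourhood of $x_0$.

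The heart of the argument is the following identification. Entries of $Y_{S_i}$ and $Y_{S_i}^{-1}$ lie in $\mc A_{\widetilde X}(S_i)$, hence admit asymptotic expansions along $\pi^{-1}(Z)\cap S_i$; in particular they are bounded holomorphic functions there. Pushed via $\pi$ to $U_i$, they define bounded holomorphic matrices near $U_i\cap\partial U_i$. Since $\dbtxsa$ is stable under multiplication by bounded analytic functions (a bounded prefactor does not affect the tempered growth condition defining the sheaf), $Y_{S_i}$ acts on the explicit double complex \eqref{eq:de_rham_explicit} and, by conjugating $\nabla$ to $\nabla^{el}$, yields an isomorphism
\[
\drt[X]{\mc M}|_{(U_i)_{sa}}\;\xrightarrow{\;\sim\;}\;\drt[X]{\mc M^{el}}|_{(U_i)_{sa}}\qquad\text{in }\bdc{\com_{(U_i)_{sa}}}.
\]

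To pass from these sector-wise isomorphisms to constructibility in a full neighbourhood $W$ of $x_0$, I would use a \v Cech argument with respect to the cover $\{S_i\}$ of a neighbourhood of $\pi^{-1}(x_0)\subset\widetilde X$. On each overlap $S_i\cap S_j$ the transition matrices $Y_{S_j}Y_{S_i}^{-1}$ again have entries in $\mc A_{\widetilde X}(S_i\cap S_j)$, so they are bounded; hence every term of the \v Cech complex computing $\drt[X]{\mc M}$ in $W_{sa}$ is identified, by bounded multiplication, with the corresponding term for $\mc M^{el}$. Passing to cohomology sheaves on $W_{sa}$ yields a local isomorphism of the cohomology sheaves of $\drt[X]{\mc M}$ and $\drt[X]{\mc M^{el}}$. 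By Lemma \ref{lem:constructibility_elem_model} applied to each summand $\mc L^{\phi_\alpha}\otimes\mc R_\alpha$, together with closure of \reacy\ under finite direct sums, $\drt[X]{\mc M^{el}}$ is \reac, and the conclusion follows in $W$, hence globally by covering $Z$.

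The step I expect to be the main obstacle is the rigorous justification of this last gluing: the local isomorphisms $Y_{S_i}$ do not agree on overlaps, so they do not define a genuine morphism of sheaves on $\xsa$, and one must either argue at the level of cohomology sheaves (using that \reacy\ is a property of $\mathcal H^*$) or lift the entire argument to $\widetilde X$ using a tempered De Rham formalism on the real blow-up (in the spirit of the real analytic case mentioned at the end of Subsection \ref{subsection:tempered_de_rham}) and then push forward via $\pi$. Either route is delicate because $\dbtxsa$ is defined on $X$ rather than on $\widetilde X$, and the asymptotic-expansion data on $\widetilde X$ must be translated into genuine temperate estimates on $X$, uniformly across the sectors.
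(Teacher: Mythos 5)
Your reduction to sectors and the observation that $Y_S$ acts on the explicit double complex \eqref{eq:de_rham_explicit} because its entries are bounded is the correct key step, but the gluing problem you flag at the end is a genuine gap, and neither of the two exits you suggest matches how the paper closes it. Arguing ``at the level of cohomology sheaves'' does not obviously work because $\rea$-sa-constructibility of $F\in\bdc{\com_\xsa}$ is a condition on $\RH[\com_X]{G}{F}$ (Definition \ref{df_constructible}), which does not decompose over the cohomology sheaves $\mathcal H^j(F)$ on $\xsa$; and lifting the whole tempered De Rham formalism to $\widetilde X$ is not attempted in the paper and would require redeveloping $\dbt$ on a manifold with boundary.

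The way the paper sidesteps the gluing is by never promoting the sector-wise conjugations $Y_{S_i}$ to a morphism of complexes of sheaves on $\xsa$. Instead one observes that \reacy\ is a property to be tested against objects $G=\com_V$, $V\in\Op^c(\xsa)$, and after reducing (via Lemma \ref{lem:regular_twist} with $\mc R=\mathcal O_X[*Z]$) to $V\subset X\setminus Z$, one chooses the finite cover $\{S_k\}_{k\in K}$ so that \emph{every} intersection $\bigcap_{l\in L}S_l$ is again a multisector small with respect to $(r,\tau)$, and writes $V=\bigcup_k V_k\cup W$ with $V_k=V\cap\pi(S_k)$ and $\overline W\cap Z=\varnothing$. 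Since $\bdc[\rea-c]{\com_X}$ is a triangulated subcategory stable under finite direct sums, the \v Cech resolution of $\com_V$ by the $\com_{\bigcap_l V_l}$ reduces the problem to a single $V$ with $\pi^{-1}(V)$ contained in one small multisector $S$. For such a $V$, $\THom(\com_V,\Db_X)$ is a $\pi_*\mc A_{\widetilde X}|_S$-module, so $Y_S$ gives an honest isomorphism of the total complex of \eqref{eq:de_rham_explicit}, hence $R\Ho[\com_X]{\com_V}{\drt[X]{\mc M}}\simeq R\Ho[\com_X]{\com_V}{\drt[X]{(\mc M^{el})}}$, and Lemma \ref{lem:constructibility_elem_model} concludes. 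So the ``non-matching on overlaps'' problem never arises: for each fixed test sheaf $\com_V$ you pick one $Y_S$, and the compatibility needed across sectors is supplied by the triangulated-category stability of constructibility, not by any gluing of the $Y_{S_i}$.
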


\begin{proof}
  For sake of shortness, let us set $Z:=\SM$.

We want to prove that for any $G\in\bdc[\rea-c]{\com_X}$ 
\begin{equation}
  \label{eq:constructibility}
  R\Ho[\com_\xsa]G{\drt[X]{\mc M}}\in\bdc[\rea-c]{\com_X}\ .
\end{equation}

First, let us remark that it is sufficient to prove
\eqref{eq:constructibility} for $G=\com_U$ for $U\in\Op^c(\xsa)$.

We have the following sequence of isomorphisms

  \begin{eqnarray*}
    R\Ho[\com_X]{\com_U}{\omt X\ou[L]{\rho_!\D_X}\otimes\rho_! R\Gamma_{[X\setminus Z]}\mc M}  &\simeq &   R\Ho[\com_X]{\com_U}{\omt X\ou[L]{\rho_!\D_X}\otimes\rho_!( \mathcal{O}_X[*Z]\overset{\mbb D}\otimes\mc M)}  \\
    &  \simeq  &  R\Ho[\com_X]{\com_U\otimes \com_{X\setminus Z}}{\omt X\ou[L]{\rho_!\D_X}\otimes\rho_!\mc M}   \ .\\
  \end{eqnarray*}

  Where we used Lemma \ref{lem:regular_twist} with $\mc R:=\mathcal{O}_X[*Z]$
  in the second isomorphism.

  Hence, it is sufficient to prove the
  \eqref{eq:constructibility} for $G=\com_V$,
  $V\in\Op^c(\xsa)$, $V\subset X\setminus Z$.

  Let us briefly recall the meaning of the hypothesis \emph{(iii)} of
  the statement we are proving. The problem being local, we can
  suppose that $X\simeq \com^n$ and
  $Z\simeq\{(x_1,\ldots,x_n)\in\com^n;x_1\cdot\ldots\cdot
  x_k=0\}$. Furthermore, as $\mc M\simeq R\Gamma_{[X\setminus Z]}\mc
  M$ and as $V\subset X\setminus Z$ we can suppose that both $\mc M$
  and $\mbb D\mc M$ are meromorphic connections, in particular they
  are locally free $\mathcal{O}[*Z]$-modules of finite rank. 
  Let $\Omega^\bullet_X$ be the complex of differential forms on $X$.

  Recall that we defined $\widetilde X$ as the real oriented blow-up
  of the irreducible components of $Z$ and by $\pi:\widetilde X\to X$
  the composition of all these. Then, locally $\widetilde X\simeq
  (S^1\times\rea_{\geq0})^k\times\com^{n-k}$. Now, as $\mc M$ has an
  elementary $\mc A$-decomposition, there exist an elementary model
  $(\mc M^{el},\nabla^{el})$ and $(r,\tau)\in\reagz^2$ such that for
  any multisector $S\subset\widetilde X$ small with respect to
  $(r,\tau)$ there exists $Y_S\in\mathrm Gl(\mathrm{rk}\,\mc M,\mc
  A_{\widetilde X}(S))$ giving an isomorphism of complexes
$$
\xymatrix{
      0\ar[r] 
      & \pi^{-1}\mc M\ou{\pi^{-1}\mathcal{O}_X}\otimes\mc A_{\widetilde X}(S)
        \ar[r]^{\nabla\phantom{abcde}}\ar[d]^{Y_S\cdot}_{\sim}
     & \pi^{-1}(\mc M\ou{\mathcal{O}_X}{\otimes}\Omega^1_X)\ou{\pi^{-1}\mathcal{O}_X}{\otimes}\mc A_{\widetilde X}(S)
        \ar[r]^{\phantom{abcdefgasdfas}\nabla}\ar[d]^{Y_S\cdot}_{\sim}
     &     \ldots\\
        0\ar[r] 
     & \pi^{-1}\mc M^{el}\ou{\pi^{-1}\mathcal{O}_X}\otimes\mc A_{\widetilde X}(S)
        \ar[r]^{\nabla^{el}\phantom{abcd}}
     & \pi^{-1}(\mc M^{el}\ou{\mathcal{O}_X}{\otimes}\Omega^1_X)\ou{\pi^{-1}\mathcal{O}_X}{\otimes}\mc A_{\widetilde X}(S)
        \ar[r]^{\phantom{abcdefgasdfasfe}\nabla^{el}}
&        \ldots \ .\\
}
$$

Clearly, for any $(r,\tau)\in\reagz^2$, there exists a finite family
$\{S_k\}_{k\in K}$ of multisectors small with respect to $(r,\tau)$
such that, for any $L\subset K$, $\cap_{l\in L}S_l$ is a multisector
small with respect to $(r,\tau)$ and $\cup_{k\in K} S_k$ is an open
neighborhood of $\pi^{-1}(Z)\subset\widetilde X$.

Coming back to the proof of \eqref{eq:constructibility}, as
said above it is sufficient to prove it for $G=\com_V$,
$V\in\Op^c(\xsa)$, $V\subset X\setminus Z$. Given such a $V$, consider
$V_k:=V\cap\pi(S_k)$ for $\{S_k\}_{k\in K}$ a family of multisectors
as above, then there exists $W\in\Op^c(\xsa)$ such that $\overline
W\cap Z=\varnothing$ and $V=\cup_{k\in K}V_k\cup W$. Now, since for
any $L\subset K$, $\cap_{l\in L}V_l$ is contained in a multisector
small with respect to $(r,\tau)$, it follows that it is sufficient to prove \eqref{eq:constructibility} for
$G=\com_V$, $V\in\Op^c(\xsa)$, $V\subset X\setminus Z$ and
$\pi^{-1}(V)$ contained in a multisector $S$ small with respect to
$(r,\tau)$.

Recall that we denote by $X_\rea$ the real analytic manifold
underlying $X$, by $\mc C^\omega_{X_\rea}$ the sheaf of real analytic
functions on $X_\rea$. Furthermore, we denote by $\Omega^{\bullet,\bullet}_{X_\rea}$ the
double complex of sheaves of differential forms on $X_\rea$ with
coefficients in $\mc C^\omega_{X_\rea}$.

In Subsection \ref{subsection:tempered_de_rham}, in
\eqref{eq:de_rham_explicit}, we proved that
$R\Ho[\com_X]{\com_V}{\drt[X]{\mc M}}$ is isomorphic to the total
complex relative to \eqref{eq:de_rham_explicit}.

Now, as $\THom(\com_{\pi(S)\setminus Z},\Db)$ is a $\pi_*\mc A_{\widetilde
  X}|_S$-module, it makes sens to consider $Y_S$ as an isomorphism
$$ \THom(\com_{V},\Db)\ou{\mc C^\omega_{X_\rea}}{\otimes} \Omega_{X_\rea}^{j,k}\ou{\mathcal{O}_X}{\otimes}\mbb D\mc M\ou[{ Y_S\cdot}]{\sim}\lra 
\THom(\com_{V},\Db)\ou{\mc C^\omega_{X_\rea}}{\otimes}
\Omega_{X_\rea}^{j,k}\ou{\mathcal{O}_X}{\otimes}\mbb D\mc M^{el} \ .
$$
which satisfies the natural commuting conditions with respect to
$\nabla$, $\nabla^{el}$ and $\overline\partial$.

It follows that the total complex of \eqref{eq:de_rham_explicit} is
isomorphic to the total complex relative to the double complex
obtained from \eqref{eq:de_rham_explicit} replacing $\mc M$ with $\mc
M^{el}$ and $\nabla$ with $\nabla^{el}$.

The total complex of this last double complex is isomorphic to
$R\Ho[\com_X]{\com_V}{\drt[X]{(\mc M^{el})}}$.
 
By Lemma \ref{lem:constructibility_elem_model}, $\drt[X]{(\mc M^{el})}$ is
\reac. Hence we have that $\drt[X]{\mc M}$ is \reac\ too.

\end{proof}

\begin{prop}\label{prop:constructibility_connections}
  Let $X$ be a complex analytic manifold, $\mc M\in\dbdx[h]$. Then
  $\drt[X]{(\RGa{X\setminus S(\mc M)}{\mc M})}$ is \reac.
\end{prop}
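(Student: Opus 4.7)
The proof is a reduction, via modifications and ramifications, to the situation already handled by Lemma \ref{lem:constructibility_A-decomp} on elementary $\mc A$-decompositions. The plan is to first arrange that the singular locus is a normal crossing divisor, then to trivialize the irregularity type by an inverse image, and finally to descend using Proposition \ref{prop:constructibility_inv_im}.

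\reacy\ can be tested locally on $X$, and since it is a triangulated property, we may reduce to $\mc M$ concentrated in a single degree using the standard truncation triangles. Fix $x_0\in X$. If $x_0\notin S(\mc M)$, then locally $\RGa{X\setminus S(\mc M)}{\mc M}\simeq\mc M$ is a regular connection on an open subset disjoint from the singular locus, and the conclusion follows from Lemma \ref{lem:regular_twist}. Assume therefore $x_0\in S(\mc M)$. By Hironaka's desingularization, choose a projective modification $\sigma_0:X_0\to X$ with respect to $S(\mc M)$ such that $\sigma_0^{-1}(S(\mc M))$ is a normal crossing divisor. By Proposition \ref{prop:temp_supp_inv_image}, $\Dfinv{\sigma_0}(\RGa{X\setminus S(\mc M)}{\mc M})$ is isomorphic to $\RGa{X_0\setminus\sigma_0^{-1}(S(\mc M))}{\Dfinv{\sigma_0}\mc M}$, and by Proposition \ref{prop:constructibility_inv_im}(i) it is enough to prove \reacy\ on $X_0$. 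Replacing $X$ and $\mc M$ accordingly, we may assume from the start that $S(\mc M)$ is a normal crossing hypersurface and that $\mc N:=\RGa{X\setminus S(\mc M)}{\mc M}$ is a meromorphic connection with poles along $S(\mc M)$.

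Next, apply Theorem \ref{thm:asymptotic_lift} to $\mc N$ at $x_0$: there exist a neighborhood $W$ of $x_0$, a modification $\sigma:Y\to X$ with respect to $W\cap S(\mc M)$, and a ramification $\eta:X'\to Y$ fixing $\sigma^{-1}(S(\mc M))$ such that $\Dfinv{(\eta\circ\sigma)}\mc N$ admits an elementary $\mc A$-decomposition on $X'$ above $W$. A further application of Proposition \ref{prop:temp_supp_inv_image} identifies $\Dfinv{(\eta\circ\sigma)}\mc N$ with $\RGa{X'\setminus Z'}{\Dfinv{(\eta\circ\sigma)}\mc M}$, where $Z':=(\eta\circ\sigma)^{-1}(S(\mc M))$ is a normal crossing divisor. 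All three hypotheses of Lemma \ref{lem:constructibility_A-decomp} are now in force, so $\drt[X']{\Dfinv{(\eta\circ\sigma)}\mc N}$ is \reac.

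To conclude, apply Proposition \ref{prop:constructibility_inv_im}(ii) to the ramification $\eta$ to transfer \reacy\ down to $Y$, and then Proposition \ref{prop:constructibility_inv_im}(i) to the modification $\sigma$ to bring it back down to $W$; letting $x_0$ vary yields the result globally on $X$. The main technical nuisance is bookkeeping rather than a single deep step: one must verify that the hypothesis $\RGa{X\setminus Z}{\mc N}\simeq\mc N$ required by Proposition \ref{prop:constructibility_inv_im} is preserved along both $\sigma$ and $\eta$, and that singular loci behave compatibly under these maps; both facts follow routinely from Proposition \ref{prop:temp_supp_inv_image} together with Theorem \ref{thm:RGamma}(iii).
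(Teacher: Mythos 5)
Your argument tracks the paper closely in the hypersurface case (desingularize to normal crossings via a modification, apply Theorem \ref{thm:asymptotic_lift} to get an elementary $\mc A$-decomposition after a further modification and a ramification, conclude with Lemma \ref{lem:constructibility_A-decomp}, and descend via Proposition \ref{prop:constructibility_inv_im}). One minor slip in ordering: you truncate to a single cohomology degree before pulling back, but $\Dfinv{\sigma_0}$ of a module may again be a complex, so the truncation argument must be redone after the modification; the paper performs it only once, after arranging the normal crossing situation.

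The genuine gap is that you implicitly assume $S(\mc M)$ is a hypersurface. Proposition \ref{prop:constructibility_inv_im}(i) — the tool you use to descend along $\sigma_0$ — is stated only for $Z$ a closed analytic \emph{hypersurface} with $R\Gamma_{[X\setminus Z]}\mc M\simeq\mc M$. But the singular locus of a holonomic $\D$-module can have codimension $\geq 2$, and for such $Z$ you cannot replace $Z$ by some larger hypersurface $H\supset Z$ either, because $R\Gamma_{[X\setminus H]}$ and $R\Gamma_{[X\setminus S(\mc M)]}$ differ on $H\setminus S(\mc M)$. The paper handles this by a separate step you omit: write locally $S(\mc M)=H_1\cap\ldots\cap H_r$ as an intersection of hypersurfaces, set $V_j:=X\setminus H_j$ so $X\setminus S(\mc M)=V_1\cup\ldots\cup V_r$, and induct on $r$ using the Mayer--Vietoris distinguished triangle
\begin{equation*}
  \RGa{V_1\cup\ldots\cup V_r}{\mc M}\lra\RGa{V_1\cup\ldots\cup V_{r-1}}{\mc M}\oplus\RGa{V_r}{\mc M}\lra\RGa{(V_1\cap V_r)\cup\ldots\cup (V_{r-1}\cap V_r)}{\mc M}\overset{+1}\lra
\end{equation*}
to reduce to the hypersurface case $r=1$. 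You either need to supply this induction, or else verify directly that Proposition \ref{prop:constructibility_inv_im}(i) remains valid for $Z$ an arbitrary closed analytic set (which would require reproving it, since the paper only asserts it for hypersurfaces).
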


\begin{proof}
First, let us suppose that $S(\mc M)$ is a hypersurface.

Locally on $X$, there exists a finite sequence of complex blow-up maps
$\pi:X'\to X$ such that, denoting $Z':=\pi^{-1}(S(\mc M))$,
$\pi|_{X'\setminus Z'}$ is a biholomorphism and $Z'$ is a normal
crossings hypersurface. By Proposition \ref{prop:constructibility_inv_im}
\emph{(i)} and Proposition \ref{prop:temp_supp_inv_image}, we have
that it is sufficient to prove the statement for $\mc M\in\dbd[h]X$ such that $S(\mc M)$ is a normal
crossings hypersurface and $R\Gamma_{[X\setminus\SM]}\mc M\simeq \mc
M$.

Now, as $\mc M$ is a bounded complex, by using inductively the
distinguished triangles $(1.7.3)$ or $(1.7.4)$ of \cite{ks_som}, one
checks easily that it is sufficient to prove the statement for $\mc M\in\dmod[h]X$ such that $S(\mc M)$ is a normal
crossings hypersurface and $R\Gamma_{[X\setminus \SM]}\mc M\simeq \mc
M$.

Now, $\mc M$ can be considered as a meromorphic connection. By Theorem
\ref{thm:asymptotic_lift}, locally on $X$, there exists a composition
of modifications and ramification maps $\pi:X'\to X$ such that $\dfinv
\pi\mc M$ admits an elementary $\mc A$-decomposition. Hence, by
Proposition \ref{prop:constructibility_inv_im} \emph{(i)} and \emph{(ii)}
it is sufficient to prove the statement for $\mc M\in\dmod[h]X$ such that $S(\mc M)$ is a normal crossings
hypersurface, $R\Gamma_{[X\setminus S(\mc M)]}\mc M\simeq \mc M$ and
it admits an elementary $\mc A$-decomposition.

This last case follows by Lemma \ref{lem:constructibility_A-decomp}.

To conclude the proof of Proposition
\ref{prop:constructibility_connections}, let us consider the case of $S(\mc
M)$ an analytic set. 

Locally, there exist hypersurfaces $H_1,\ldots,H_r$ such that $S(\mc
M)\simeq H_1\cap\ldots\cap H_r$. For sake of shortness, set
$V_j:=X\setminus H_j$, hence $X\setminus S(\mc M)\simeq
V_1\cup\ldots\cup V_r$. Let us prove the \reacy\ of
$\drt[X]{(\RGa{V_1\cup\ldots\cup V_r}{\mc M})}$ by induction on
$r$. The case $r=1$ has been treated above.

For $r>1$, we have the following distinguished triangle,

\begin{equation}
  \label{eq:dt_V_r}  
  \RGa{V_1\cup\ldots\cup V_r}{\mc M}\to\RGa{V_1\cup\ldots\cup V_{r-1}}{\mc M}\oplus\RGa{V_r}{\mc M}\to\RGa{(V_1\cap V_r)\cup\ldots\cup (V_{r-1}\cap V_r)}{\mc M}\overset{+1}\to \ . 
\end{equation}

By the inductive hypothesis, the second and the third terms of
\eqref{eq:dt_V_r} have \reac\ tempered De Rham complexes. It follows
that $\drt[X]{(\RGa{X\setminus S(\mc M)}{\mc M})}$ is \reac.

\end{proof}


\subsection{The case of $R\Gamma_{[\SM]}\mc M$}\label{subsec:finiteness_support}

In this subsection we are going to prove the following

\begin{prop}\label{prop:finiteness_support}
  Let $X$ be an analytic manifold, $\mc M\in\dbdx[h]$. Then
  $\drt[X]{(\RGa{\SM}{\mc M})}$ is \reac.
\end{prop}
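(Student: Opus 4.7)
The plan is an outer induction on $\dim X$ with Theorem \ref{thm:main} as the inductive statement. The base case $\dim X = 0$ is immediate since $\SM \subsetneq X$ forces $\SM = \varnothing$. For the inductive step, I assume Theorem \ref{thm:main} holds on complex manifolds of dimension strictly less than $\dim X$, and I actually prove the slightly stronger statement that for any $\mc M' \in \bdc[h]{\D_X}$ and any closed analytic subset $T \subsetneq X$, $\drt[X]{\RGa{T}{\mc M'}}$ is \reac. I proceed by an inner induction on $\dim T$, whose base $T = \varnothing$ is trivial. For the inner step, the problem being local on $X$, I apply Hironaka's embedded resolution to obtain a proper morphism $\pi : \tilde X \to X$ of complex manifolds of the same dimension, which is an isomorphism over $X \setminus T_{sing}$ and for which $\tilde T := \pi^{-1}(T)$ is a simple normal crossings divisor with smooth irreducible components $D_1, \ldots, D_N$.

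Set $\mc N := \RGa{T}{\mc M'}$ and $\tilde{\mc N} := \pi^! \mc N$, a holonomic $\D_{\tilde X}$-module supported on $\tilde T$. The counit $\pi_+ \tilde{\mc N} \to \mc N$ of the adjunction $(\pi_+, \pi^!)$ is an isomorphism over $X \setminus T_{sing}$, so its cone $\mc C$ is holonomic and supported on $T_{sing}$, of dimension strictly less than $\dim T$. Hence $\mc C \simeq \RGa{T_{sing}}{\mc C}$, and by the inner inductive hypothesis $\drt[X]{\mc C}$ is \reac. The triangle $\pi_+ \tilde{\mc N} \to \mc N \to \mc C \overset{+1}\lra$ then reduces the claim to the \reacy\ of $\drt[X]{\pi_+ \tilde{\mc N}}$. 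For this I invoke the proper direct image formula $\drt[X]{\pi_+ \tilde{\mc N}} \simeq R\pi_* \drt[\tilde X]{\tilde{\mc N}}$ (the natural dual of Theorem \ref{thm:indsheaves}(ii)) together with the preservation of \reacy\ under $R\pi_*$ for proper $\pi$; this reduces everything to showing $\drt[\tilde X]{\tilde{\mc N}}$ is \reac\ on $\tilde X$.

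Since $\tilde{\mc N}$ is supported on $\tilde T = D_1 \cup \ldots \cup D_N$, a Mayer-Vietoris argument iterated over the closed cover $\{D_\alpha\}$ of $\tilde T$ further reduces the claim to the \reacy\ of $\drt[\tilde X]{\RGa{D}{\tilde{\mc N}}}$ for every intersection $D := D_{\alpha_1} \cap \ldots \cap D_{\alpha_k}$, a closed smooth submanifold of $\tilde X$ of dimension strictly less than $\dim X$. By Kashiwara's equivalence, $\RGa{D}{\tilde{\mc N}} \simeq \tilde i_+ \mc K$ for some $\mc K \in \bdc[h]{\D_D}$ with $\tilde i : D \hookrightarrow \tilde X$; applying the direct image formula for the closed embedding $\tilde i$ and then the outer inductive hypothesis (Theorem \ref{thm:main} on $D$, whose dimension is strictly less than $\dim X$, yielding the \reacy\ of $\drt[D]{\mc K}$) completes the argument.

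The main obstacle is establishing the direct image formula $\drt[X]{\pi_+ \mc P} \simeq R\pi_* \drt[\tilde X]{\mc P}$ for proper morphisms in the subanalytic setting: although classical for the non-tempered De Rham complex and expected as the natural counterpart of Theorem \ref{thm:indsheaves}(ii), its verification for the tempered complex requires combining $\D$-module adjunctions with the specific compatibilities of $\omt X$ and $\rho_!$ provided by the formalism of subanalytic sheaves. A secondary, more routine check is that $R\pi_*$ preserves \reacy\ when $\pi$ is proper and real analytic, which follows from Definition \ref{df_constructible} together with standard adjunction properties.
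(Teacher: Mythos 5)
Your proposal takes a genuinely different route from the paper's. You resolve $T$ to a simple normal crossings divisor via Hironaka, push forward the pulled-back module $\pi^!\mc N$ by $\pi_+$, split off the cone supported on $T_{sing}$, and then on the resolution use Mayer--Vietoris over the smooth components together with Kashiwara's equivalence to drop to lower-dimensional manifolds, running a double induction on $(\dim X,\dim T)$. The paper instead runs a single induction on $d=\dim\SM$, splits $Z:=\SM$ into its regular part $Z_r$ and singular part $Z_s$, and for the regular part passes to the strict transform $Z'$ (a closed submanifold of a blow-up of $X$) by pushing forward at the level of constructible sheaves: one writes $\com_{U\cap Z_r}\simeq R\pi_{Z'!}\com_V$, applies the sheaf adjunction for $(R\pi_{Z'!},\pi_{Z'}^!)$, and then uses Theorem \ref{thm:indsheaves}(ii) to identify $\pi_{Z'}^!$ of the tempered De Rham complex with $\drt[Z']{\Dfinv{\pi_{Z'}}\RGa Z{\mc M}}$, whose singular locus on $Z'$ has dimension strictly less than $d$. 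This difference is decisive for the technical cost: the paper needs only the inverse-image compatibility of the tempered De Rham complex, already available as Theorem \ref{thm:indsheaves}(ii), whereas your argument requires the proper direct-image formula $\drt[X]{\pi_+\mc P}\simeq R\pi_*\,\drt[\,\widetilde X\,]{\mc P}$ --- precisely the obstacle you flag. That formula does hold in the Kashiwara--Schapira ind-sheaf framework, so your outline can in principle be completed, but it is not among the results the paper cites or establishes; the paper's strategy avoids it entirely by performing the pushforward at the sheaf-theoretic level rather than the $\D$-module level. Your route buys conceptual transparency (reduction to SNC, Kashiwara's equivalence, a clean drop in $\dim X$); the paper's buys economy of means, using only the functorial properties it has already recorded.
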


\begin{proof}
  Since the problem is local, we can add (or neglect when necessary)
  the additional hypothesis that the singular locus of $\mc M$ is
  connected.

  For sake of shortness, set $Z:=\SM$. 

Let us prove our result with an induction on $d:=\dim\,Z$.

Clearly, for any complex manifold $Y$ and any $\mc N\in\bdc[h]{\D_Y}$
such that $\dim S(\mc N)=0$, $\drt[Y]{\RGa{S(\mc N)}{\mc N}}$ is
\reac.

Now, suppose $d:=\dim\,Z>0$ and that for any complex manifold $Y$ and
any $\mc N\in\bdc[h]{\D_Y}$ such that $\dim S(\mc N)<d$,
$\drt[Y]{\RGa{S(\mc N)}{\mc N}}$ is \reac. By Proposition
\ref{prop:constructibility_connections}, it follows that $\drt[Y]{(\mc N)}$
is \reac.

  We want to prove that, for any $G\in\bdc[\rea-c]{\com_X}$ 
  \begin{equation}
    \label{eq:constructibility_support}
    R\Ho[\com_X]{G}{\drt[X]{(\RGa{Z}{\mc M})}}\in\bdc[\rea-c]{\com_X} . 
  \end{equation}

  As the condition is local on $X$ and as $\bdc[\rea-c]{\com_X}$ is
  generated by objects of the form $\com_U$, for $U\in\Op(\xsa)$, it
  is sufficient to prove \eqref{eq:constructibility_support} with
  $G\simeq \com_U$, $U\in\Op^c(\xsa)$.

First, we have the following sequence of isomorphisms in
$\bdc{\com_X}$

\begin{eqnarray*}
  R\Ho[\com_X]{\com_U}{\omt X\ou[L]{\rho_!\D_X}\otimes\rho_!\RGa{Z}{\mc M}}  & \simeq & \qquad\qquad\qquad\qquad \hspace{30mm}\phantom{a}
\end{eqnarray*}
\vspace{-12mm}
\nopagebreak
  \begin{eqnarray*} 
    \phantom{a}\qquad\qquad\qquad\qquad\qquad &  \simeq  &  R\Ho[\com_X]{\com_U}{\omt X\ou[L]{\rho_!\D_X}\otimes\rho_!(\RGa{Z}{\mathcal{O}_X}\overset{\mbb D}\otimes\RGa{Z}{ \mc M})} \\
&  \simeq  &  R\Ho[\com_X]{\com_U\otimes \com_Z}{\omt X\ou[L]{\rho_!\D_X}\otimes\rho_!\RGa{Z}{\mc M}}  \\
&  \simeq  &  R\Ho[\com_X]{\com_{U\cap Z}}{\omt X\ou[L]{\rho_!\D_X}\otimes\rho_!\RGa{Z}{\mc M}} \ . \\
\end{eqnarray*}

We used Theorem \ref{thm:RGamma} \emph{(ii)} and the fact that $\RGa
Z{\RGa Z{\mc M}}\simeq \RGa Z{\mc M}$ in the first isomorphism and
Lemma \ref{lem:regular_twist} with $\mc R:=R\Gamma_{[Z]}\mathcal{O}_X$
in the second isomorphism.

It follows that it is sufficient to prove
\eqref{eq:constructibility_support} with $G\simeq\com_{U\cap Z}$ for all
$U\in\Op^c(\xsa)$.

Now, we are going to use some results of analytic geometry concerning
analytic sets, their regular and singular parts and their strict
transform (see \cite{hironaka_resolution_singularities} or
\cite{bm_local_resolutions_singularities}); for the convenience of the
reader we will briefly recall them here. Given an analytic set
$Z\subset X$, a point $z\in Z$ is said \emph{regular} if there exists
a neighborhood $W$ of $z$ such that $Z\cap W$ is a closed submanifold
of $W$. It is well known that the set of regular points of an analytic
set $Z\subset X$, denoted $Z_{r}$, is a dense open subset of
$Z$. Furthermore $Z_{s}:=Z\setminus Z_{r}$ is a closed analytic set
called the \emph{singular part of $Z$} whose dimension is smaller that
the dimension of $Z$. Moreover, there exists a proper morphism of
analytic manifolds $\pi:X'\to X$ and a closed submanifold $Z'$ of
$X'$, called the \emph{strict transform of} $Z$, such that
\begin{equation}
   \label{eq:modification_reg}
 \pi|_{Z'\setminus\pi^{-1}(Z_s)}:Z'\setminus\pi^{-1}(Z_s)\lra Z\setminus Z_s  
\end{equation}
is an isomorphism.

Now, it is sufficient to prove \eqref{eq:constructibility_support} with
$G\simeq\com_{U\cap Z_{r}}$ and $G\simeq\com_{U\cap Z_{s}}$ for
$U\in\Op^c(\xsa)$. Let us treat these two cases separately.

\emph{The case of $\com_{U\cap Z_{r}}$.}

Let $\pi:X'\to X$ be the map described above in
\eqref{eq:modification_reg} and let $Z'$ be the strict transform of
$Z$, set $\pi_{Z'}:=\pi|_{Z'}$. Clearly, $V:=\pi^{-1}(U\cap Z_{r})$ is
a relatively compact subanalytic subset of $Z'$ such that $\com_{U\cap
  Z_{r}}\simeq R\pi_{Z'\,!}\com_V$.

We have the following sequence of
isomorphisms
\begin{eqnarray}\label{eq:Z_r}
    R\Ho[\com_X]{\com_{U\cap Z_{r}}}{\omt X\ou[L]{\rho_!\D_X}\otimes\rho_! \RGa{Z}{\mc M}} & \simeq & \qquad \hspace{30mm}\phantom{a}
\end{eqnarray}
\vspace{-5mm}
\nopagebreak
  \begin{eqnarray*} 
    \phantom{a}\qquad\qquad\qquad\qquad\qquad  &\simeq &   R\Ho[\com_X]{R\pi_{Z'\,!}\,\com_V}{\omt X\ou[L]{\rho_!\D_X}\otimes\rho_!\RGa{Z}{\mc M}} \\\notag
    &  \simeq  &  R\pi_*R\Ho[\com_{Z'}]{\com_V}{{\pi_{Z'}}^!(\omt X\ou[L]{\rho_!\D_X}\otimes\rho_!\RGa{Z}{\mc M})} \\\notag
    &  \simeq  &  R\pi_*R\Ho[\com_{Z'}]{\com_{V}}{\omt {Z'}\ou[L]{\rho_!\D_{Z'}}\otimes\rho_!(\Dfinv {\pi_{Z'}}\RGa{Z}{\mc M})} \ . \\\notag
  \end{eqnarray*}
  We used Theorem \ref{thm:indsheaves}(ii) in the third isomorphism.

  Since $\dim S(\Dfinv {\pi_{Z'}}\RGa{Z}{\mc M})<\dim\,
  Z'=d$, by the inductive hypothesis we have that $\drt[Z']{(\Dfinv
    {\pi_{Z'}}\RGa{Z}{\mc M})}$ is \reac.

\emph{The case of $\com_{U\cap Z_{s}}$.}

We have the following sequence of isomorphisms

\begin{eqnarray*}
  R\Ho[\com_X]{\com_{U\cap Z_s}}{\omt X\ou[L]{\rho_!\D_X}\otimes\rho_!\RGa{Z}{\mc M}} 
 & \simeq & \qquad\qquad\qquad\qquad \hspace{30mm}\phantom{a}
\end{eqnarray*}
\vspace{-8mm}
\nopagebreak
  \begin{eqnarray*} 
    \phantom{a}\qquad\qquad\qquad\qquad\qquad  &\simeq&
  R\Ho[\com_X]{\com_U\otimes \com_{Z_s}}{\omt X\ou[L]{\rho_!\D_X}\otimes\rho_!\RGa{Z}{\mc M}}\\
  &\simeq&
  R\Ho[\com_X]{\com_U}{\omt X\ou[L]{\rho_!\D_X}\otimes\rho_!(\RGa{Z_s}{\mathcal{O}_X}\overset{\mbb D}\otimes\RGa{Z}{ \mc M})} \\
  &\simeq&
  R\Ho[\com_X]{\com_U}{\omt X\ou[L]{\rho_!\D_X}\otimes\rho_!\RGa{Z_s}{\mc M}} \ ,
\end{eqnarray*}
where we used Lemma \ref{lem:regular_twist} with $\mc
R:=\RGa{Z_s}{\mathcal{O}_X}$ in the second
isomorphism and Theorem \ref{thm:RGamma}\emph{(iii)} in the third isomorphism.

Since $S(\RGa{Z_s}{\mc M})\subseteq Z_s$ and its dimension is strictly
smaller than $d$, the conclusion follows by the inductive hypothesis.

\end{proof}


\addcontentsline{toc}{section}{\textbf{References}}


\begin{thebibliography}{10}

\bibitem{bm_ihes}
E.~Bierstone and P.~D. Milman.
\newblock Semianalytic and subanalytic sets.
\newblock {\em Inst. Hautes \'Etudes Sci. Publ. Math.}, (67):5--42, 1988.

\bibitem{bm_local_resolutions_singularities}
E.~Bierstone and P.~D. Milman.
\newblock Local resolution of singularities.
\newblock In {\em Real analytic and algebraic geometry ({T}rento, 1988)},
  volume 1420 of {\em Lecture Notes in Math.}, pages 42--64. Springer, Berlin,
  1990.

\bibitem{bjork}
J.-E. Bj{\"o}rk.
\newblock {\em Analytic {${\mathcal D}$}-modules and applications}, volume 247
  of {\em Mathematics and its Applications}.
\newblock Kluwer Academic Publishers Group, Dordrecht, 1993.

\bibitem{bcr}
J.~Bochnak, M.~Coste, and M.-F. Roy.
\newblock {\em Real algebraic geometry}, volume~36 of {\em Ergebnisse der
  Mathematik und ihrer Grenzgebiete (3) [Results in Mathematics and Related
  Areas (3)]}.
\newblock Springer-Verlag, Berlin, 1998.
\newblock Translated from the 1987 French original, Revised by the authors.

\bibitem{dagnolo_kashiwara_rh}
A.~D'Agnolo and M.~Kashiwara.
\newblock Riemann--{H}ilbert correspondence for holonomic {${\mathcal
  D}$}-modules.
\newblock arXiv:1311.2374.

\bibitem{dagnolo_kashiwara_dim1}
A.~D'Agnolo and M.~Kashiwara.
\newblock On a reconstruction theorem for holonomic systems.
\newblock {\em Proc. Japan Acad. Ser. A Math. Sci.}, 88(10):178--183, 2012.

\bibitem{dmr}
P.~Deligne, B.~Malgrange, and J.-P. Ramis.
\newblock {\em Singularit\'es irr\'eguli\`eres}.
\newblock Documents Math\'ematiques (Paris) [Mathematical Documents (Paris)],
  5. Soci\'et\'e Math\'ematique de France, Paris, 2007.
\newblock Correspondance et documents. [Correspondence and documents].

\bibitem{hien_periods_manifolds}
M.~Hien.
\newblock Periods for flat algebraic connections.
\newblock {\em Invent. Math.}, 178(1):1--22, 2009.

\bibitem{hironaka_resolution_singularities}
H.~Hironaka.
\newblock Resolution of singularities of an algebraic variety over a field of
  characteristic zero. {I}, {II}.
\newblock {\em Ann. of Math. (2) 79 (1964), 109--203; ibid. (2)}, 79:205--326,
  1964.

\bibitem{hironaka_pisa}
H.~Hironaka.
\newblock {\em Introduction to real-analytic sets and real-analytic maps}.
\newblock Istituto Matematico ``L. Tonelli'' dell'Universit\`a di Pisa, Pisa,
  1973.
\newblock Quaderni dei Gruppi di Ricerca Matematica del Consiglio Nazionale
  delle Ricerche.

\bibitem{hironaka_subanalytic_sets}
H.~Hironaka.
\newblock Subanalytic sets.
\newblock In {\em Number theory, algebraic geometry and commutative algebra, in
  honor of {Y}asuo {A}kizuki}, pages 453--493. Kinokuniya, Tokyo, 1973.

\bibitem{kashiwara_regular1}
M.~Kashiwara.
\newblock On the maximally overdetermined system of linear differential
  equations. {I}.
\newblock {\em Publ. Res. Inst. Math. Sci.}, 10:563--579, 1974/75.

\bibitem{kashiwara_79}
M.~Kashiwara.
\newblock Faisceaux constructibles et syst\`emes holon\^omes d'\'equations aux
  d\'eriv\'ees partielles lin\'eaires \`a points singuliers r\'eguliers.
\newblock In {\em S\'eminaire Goulaouic-Schwartz, 1979--1980 (French)}, pages
  Exp. No. 19, 7. \'Ecole Polytech., Palaiseau, 1980.

\bibitem{kashiwara_riemann-hilbert}
M.~Kashiwara.
\newblock The {R}iemann-{H}ilbert problem for holonomic systems.
\newblock {\em Publ. Res. Inst. Math. Sci.}, 20(2):319--365, 1984.

\bibitem{kashiwara_dmod}
M.~Kashiwara.
\newblock {\em {$\mathcal D$}-modules and microlocal calculus}, volume 217 of
  {\em Translations of Mathematical Monographs}.
\newblock American Mathematical Society, Providence, RI, 2003.
\newblock \phantom{Translated from the 2000 Japanese original by Mutsumi Saito,
  Iwanami Series in Modern Mathematics}.

\bibitem{ks_som}
M.~Kashiwara and P.~Schapira.
\newblock {\em Sheaves on manifolds}, volume 292 of {\em Grundlehren der
  Mathematischen Wissenschaften}.
\newblock Springer-Verlag, Berlin, 1990.

\bibitem{ks_moderate_formal_cohomology}
M.~Kashiwara and P.~Schapira.
\newblock Moderate and formal cohomology associated with constructible sheaves.
\newblock {\em M\'em. Soc. Math. France (N.S.)}, (64):iv+76, 1996.

\bibitem{ks_indsheaves}
M.~Kashiwara and P.~Schapira.
\newblock Ind-sheaves.
\newblock {\em Ast\'erisque}, (271):136, 2001.

\bibitem{ks_microlocal_indsheaves}
M.~Kashiwara and P.~Schapira.
\newblock Microlocal study of ind-sheaves. {I}. {M}icro-support and regularity.
\newblock {\em Ast\'erisque}, (284):143--164, 2003.

\bibitem{kedlaya1}
K.~Kedlaya.
\newblock Good formal structures on flat meromorphic connections, {I}:
  Surfaces.
\newblock {\em Duke Math. J.}, 154(2):343--418, 2010.

\bibitem{kedlaya2}
K.~Kedlaya.
\newblock Good formal structures for flat meromorphic connections, {II}:
  Excellent schemes. 
\newblock {\em J. Amer. Math. Soc.}, 24(1):183--229, 2011.

\bibitem{majima_lnm}
H.~Majima.
\newblock {\em Asymptotic analysis for integrable connections with irregular
  singular points}, volume 1075 of {\em Lecture Notes in Mathematics}.
\newblock Springer-Verlag, Berlin, 1984.

\bibitem{malgrange_birkhauser}
B.~Malgrange.
\newblock {\em \'{E}quations diff\'erentielles \`a coefficients polynomiaux},
  volume~96 of {\em Progress in Mathematics}.
\newblock Birkh\"auser Boston Inc., Boston, MA, 1991.

\bibitem{mochizuki1} 
T.~Mochizuki.  
\newblock Good formal structure for meromorphic f\mbox{}lat
connections on smooth projective surfaces.
\newblock In {\em Algebraic analysis and around}, Adv. Stud. Pure
Math., 54, pages 223--253, Math. Soc. Japan, Tokyo, 2009.

\bibitem{mochizuki2}
T.~Mochizuki.
\newblock Wild harmonic bundles and wild pure twistor {$\mathcal D$}-modules.
\newblock {\em Ast\'erisque}, (340):x+607, 2011.

\bibitem{morando_existence_theorem}
G.~Morando.
\newblock An existence theorem for tempered solutions of {$\mathcal D$}-modules
  on complex curves.
\newblock {\em Publ. Res. Inst. Math. Sci.}, 43, 2007.

\bibitem{morando_preconstructibility}
G.~Morando.
\newblock Preconstructibility of tempered solutions of holonomic {$\mathcal
  D$}-modules.
\newblock {\em International Mathematics Research Notices},
  doi:10.1093/imrn/rns247, 2012.

\bibitem{prelli_msmf}
L.~Prelli.
\newblock Microlocalization of subanalytic sheaves.
\newblock {\em To appear as M\'emoires de la SMF 135}, arXiv:math/0702459.

\bibitem{prelli_subanalytic_sheaves}
L.~Prelli.
\newblock Sheaves on subanalytic sites.
\newblock {\em Rend. Semin. Mat. Univ. Padova}, 120:167--216, 2008.

\bibitem{prelli_laplace}
L.~Prelli.
\newblock Conic sheaves on subanalytic sites and {L}aplace transform.
\newblock {\em Rend. Semin. Mat. Univ. Padova}, 125:173--206, 2011.

\bibitem{sabbah_aif}
C.~Sabbah.
\newblock \'{E}quations diff\'erentielles \`a points singuliers irr\'eguliers
  en dimension {$2$}.
\newblock {\em Ann. Inst. Fourier (Grenoble)}, 43(5):1619--1688, 1993.

\bibitem{sabbah_ast}
C.~Sabbah.
\newblock \'{E}quations diff\'erentielles \`a points singuliers irr\'eguliers
  et ph\'enom\`ene de {S}tokes en dimension 2.
\newblock {\em Ast\'erisque}, (263):viii+190, 2000.

\bibitem{sabbah_lnm}
C.~Sabbah.
\newblock {\em Introduction to {S}tokes structures}, volume 2060 of {\em
  Lecture Notes in Mathematics}.
\newblock Springer, Heidelberg, 2013.

\end{thebibliography}

\vspace{10mm}

{\small{\sc Giovanni Morando
\medskip

Dipartimento di Matematica Pura ed Applicata,

Universit{\`a} degli Studi di Padova,

Via Trieste 63, 35121 Padova, Italy.}

E-mail address: $\texttt{gmorando@math.unipd.it}$
\medskip

{\sc Lehrstuhl f\"ur Algebra und Zahlentheorie

Universit\"atsstra\ss e 14

86159 Augsburg, Germany}

E-mail address: $\texttt{giovanni.morando@math.uni-augsburg.de}$}

\end{document}